\newtheorem{theorem}{Theorem}[section]
\newtheorem{proposition}[theorem]{Proposition}
\numberwithin{equation}{section}
\theoremstyle{definition}
\newtheorem{example}[theorem]{Example}
\theoremstyle{remark}
\newtheorem{remark}[theorem]{Remark}
\title[New Type degenerate Simsek numbers and related aspects]{New Type degenerate Simsek numbers and related aspects}
\author{Lahcen Oussi}
\address{\parbox{\linewidth}{Department of Analysis and Stochastic Processes\\Faculty of Pure and Applied Mathematics
\\Wroclaw University of Science and Technology \\
Wybrze\.{z}e Wyspia\'nskiego 27, 50-370, Wroclaw, Poland}}
\email{lahcen.oussi@pwr.edu.pl, oussimaths@gmail.com}
\begin{document}

\begin{abstract}
In this paper, we introduce a new type degenerate Simsek numbers and their generating function, which are different from degenerate Simsek number studied so far. We establish the explicit formula, recurrence relation and other identities for these numbers. We also derive several interesting expressions and relations between these numbers and certain other special numbers in the literature. In addition, several numerical examples and graphical illustrations are provided to support the theoretical results and to illustrate the behavior of the introduced numbers.
\end{abstract}

\keywords{Stirling numbers, Degenerate Stirling numbers, Simsek numbers, Generating function, Apostol-Euler numbers.}
\subjclass[2020]{Primary: 05A15; Secondary: 11B83, 15A99.}

\maketitle

\begin{section}
{Introduction and Preliminaries}
\end{section}

Special numbers and polynomials play  crucial rule to solve explicitly several problems in many scientific areas, such as in mathematical physics, biology, quantum mechanics, discrete mathematics, probability and statistics. The very well known special numbers are Stirling numbers of the first and second kind introduced by James Stirling in 1730.   

Stirling numbers of the second kind $S_{1}(n,k)$ count the number of partitions of a set of
size $n$ into $k$ disjoint, non-empty subsets. They appear as coefficients in the following expansion:
\begin{equation}\label{SNSK}
    x^n=\sum_{k=0}^{n}S_{2}(n,k)(x)_{k},
\end{equation}
where $$(x)_k=\begin{cases}
   1 & \text{ if } k=0,  \\[5pt]
    x(x-1)(x-2)\cdots (x-k+1) & \text{ if } k\geq 1,
  \end{cases}$$
denotes the falling factorial (cf.\ \cite{LC1974,SR1983}). The generating function of $S_{2}(n,k)$ is given by
\begin{equation}\label{GFSNSK}
 \frac{1}{k!}(e^t-1)^k=\sum_{n=k}^{\infty}S_{2}(n,k)\frac{t^n}{n!}.  
\end{equation}
Stirling numbers of the first kind, denoted by $S_{1}(n,k)$, are given by the following:
\begin{equation}\label{SNFK}
    (x)_n=\sum_{k=0}^{n}S_{1}(n,k)x^k,
\end{equation}
with its generating function
\begin{equation}\label{GFSNFK}
   \frac{(\log(1+t))^k}{k!}=\sum_{n=k}^{\infty}S_{1}(n,k)\frac{t^n}{n!}, 
\end{equation}
(cf.\ \cite{LC1974,SR1983}).

The modifications and degenerate versions of these numbers were studied by several researchers, see for instance \cite{LO2023, ABYSHMS2024, FTH1985, LCHPJSS1998, DSKTK2017, TKDSKHYKJW2019, IKYS2021, TMMS2012, LO2020, LO2022, MDS2028, YS2013, HKK2022} and the references therein.

For a nonzero $\alpha\in\mathbb{R}$ (or $\mathbb{C}$), the degenerate exponential function $e_{\alpha}^{x}(t)$ defined as follows
\begin{equation}\label{degexpfun}
    e_{\alpha}^{x}(t)=(1+\alpha t)^{\frac{x}{\alpha}} \text{ and } e_{\alpha}(t)=(1+\alpha t)^{\frac{1}{\alpha}},\quad \text{(see,~\cite{LC1979, TKDSK2017, TKDSKHIK2017, DSKTK2020, DSKTKHYKHL2020})}.
\end{equation}
Using the Taylor expansion, the degenerate exponential function can be expressed as follows:
\begin{equation}\label{tdegexpfun}
 e_{\alpha}^{x}(t)=\sum_{n=0}^{\infty}(x)_{n,\alpha}\frac{t^n}{n!},   
\end{equation}

where 
$$(x)_{n, \alpha}=\begin{cases}
     1 & \text{ if } n=0, \\[6pt]
    x(x-\alpha)(x-2\alpha)\cdots (x-(n-1)\alpha) & \text{ if } n\geq 1,
  \end{cases}$$
is called the degenerate falling factorial. It follows that 
$$\lim_{\alpha\rightarrow{ 0}}e_{\alpha}^{x}(t)=\sum_{}\frac{x^nt^n}{n!}=e^{xt}.$$
The degenerate Stirling numbers of the second kind $S_{2, \alpha}(n, l)$ are given as follows \cite{TK2017, DSKTK2020, DSKTKHYKHL2020, TKDSKHKK2022}:
\begin{equation}\label{dsnsc}
  (x)_{n, \alpha}=\sum_{l=0}^{n}S_{2, \alpha}(n, l)(x)_{l}. 
\end{equation}
Moreover, as an inversion formula of \eqref{dsnsc}, the degenerate Stirling numbers of the fist kind $S_{1, \alpha}(n, l)$ are defined as follows \cite{TK2017, DSKTK2020, DSKTKHYKHL2020}:
\begin{equation}\label{dsnfc}
  (x)_{n}=\sum_{l=0}^{n}S_{1, \alpha}(n, l)(x)_{l.\alpha}. 
\end{equation}
Note that 
$$\lim_{\alpha\rightarrow 0}S_{1, \alpha}(n, l)=S_{1}(n, l) \text{ and } \lim_{\alpha\rightarrow 0}S_{2, \alpha}(n, l)=S_{2}(n, l)$$
In addition, a new type degenerate Stirling numbers of the second kind $S_{2}^*(n,k|\alpha)$ were introduced in \cite{HKK2022} as 
\begin{equation}\label{ntSnHKK}
\frac{1}{k!}(e^t-1)_{k,\alpha}=\sum_{n=k}^{\infty}S_{2}^*(n,k|\alpha)\frac{t^n}{n!} \quad \text{ and } \quad S_{2}^*(n,0|\alpha)=0, \quad(n\geq 0).
\end{equation}
They reduced to Stirling numbers of the second kind~\eqref{GFSNSK}.

In 2018, Simsek \cite{YS1018} introduced new families of special numbers $y_{1}(n,k;\lambda)$, for computing negative order Euler numbers and related numbers and polynomials. Simsek numbers $y_{1}(n,k;\lambda)$ are defined  by means of generating function 
\begin{equation}\label{Snumbers}
  \frac{\Big(\lambda e^t+1\Big)^k}{k!}=\sum_{n=0}^{\infty}y_{1}(n,k;\lambda)\frac{t^n}{n!}.  
\end{equation}
They can be expressed explicitly by the following identity
\begin{equation}\label{SnumbersExp}
   y_{1}(n,k;\lambda)=\frac{1}{k!}\sum_{j=0}^{k}\binom{k}{j} j^n\lambda^j.
\end{equation}
Simsek numbers $y_{1}(n,k;\lambda)$ are related to the many well-known special numbers in the literature, among them, Bernoulli
numbers, Fibonacci numbers, Stirling numbers of the second kind, Lucas numbers and the central numbers. 

Recently, the author introduced a degenerate version of Simsek numbers $y_{1}(n,k;\lambda|\alpha)$ in \cite{LO2023}, by the mean of the generating function 
\begin{equation}\label{DSnumbers}
 \frac{\Big(\lambda e^{\frac{\log(1+\alpha t)}{\alpha}}+1\Big)^k}{k!}=\sum_{n=0}^{\infty}y_{1}(n,k;\lambda|\alpha)\frac{t^n}{n!}, 
\end{equation}
with the explicit formulas
\begin{equation}\label{expformdS1}
   y_{1}(n,k;\lambda|\alpha)=\frac{1}{k!}\sum_{m=0}^{n}\sum_{j=0}^{k} \binom{k}{j}j^m\lambda^j\alpha^{n-m}S_1(n,m)
\end{equation}
and
\begin{equation}\label{expformdS2}
   y_{1}(n,k;\lambda|\alpha)=\frac{1}{k!}\sum_{j=0}^{k} \binom{k}{j}\lambda^j\alpha^{n}\Big(\frac{j}{\alpha}\Big)_{_{n}}.
\end{equation}
For more details about the numbers $y_{1}(n,k;\lambda|\alpha)$ and related aspects, we refer the readers to \cite{LO2023}.

Kucukoglu \cite{IK2025, IK2023}, investigated the $q$-combinatorial Simsek numbers  $y_{1,q}(n,k;\lambda)$ and polynomials  $y_{1,q}(x;n,k;\lambda)$ of the first kind, respectively, as follows
$$y_{1,q}(n,k;\lambda)=\frac{1}{[k]_q!}\sum_{j=0}^{k}q^{\binom{j}{2}}\begin{bmatrix}
k \\
j 
\end{bmatrix}_{q}[j]_{q}^{n}\lambda^j$$
and 
$$y_{1,q}(x;n,k;\lambda)=\frac{1}{[k]_q!}\sum_{j=0}^{k}q^{\binom{j}{2}}\begin{bmatrix}
k \\
j 
\end{bmatrix}_{q}[x+j]_{q}^{n}\lambda^j.$$
In \cite{IKBSYS2019}, Kucukoglu et al.~defined the higher order expansion of the Simsek numbers and polynomials. Moreover, Kilar \cite{NK2024}, introduced degenerate Simsek-type numbers and polynomials of higher order. 
 
In this paper, we establish a new type degenerate Simsek numbers, which are different from degenerate Simsek numbers \eqref{DSnumbers}.
\begin{section}
{New type degenerate Simsek numbers and related formulas}
\end{section}
In this section, we define a new type degenerate Simsek numbers and we derive some related formulas including generating function, derivative formulas, recurrence formulas, and integral formulas. Moreover, we establish also a relation between these numbers and certain special numbers.

We define a new type degenerate Simsek numbers $y^{*}_{1,\alpha}(n,k;\lambda)$ by means of generating function: 
 \begin{equation}\label{newgSn}
F_{k}(t; \alpha, \lambda):=\frac{(\lambda e^t+1)_{k,\alpha}}{k!}=\sum_{n=0}^{\infty}y_{1,\alpha}^{*}(n,k;\lambda)\frac{t^n}{n!}.  
 \end{equation}
It follows that for $\alpha=0$, we obtain $y_{1, \alpha}^{*}(n,k;\lambda)=y_{1}(n,k;\lambda)$ Simsek numbers \eqref{Snumbers}.  Moreover, the function $F_{k}(t;\alpha,k)$ can be expressed as follows 
\begin{equation}\label{BerPoly}
F_{k}(t;\alpha,\lambda)=\frac{\alpha^k}{k!}\bigg(\frac{\lambda e^t+1}{\alpha}\bigg)_{k}=\frac{\alpha^k}{k!}B_{k}^{(k+1)}\left(\frac{\lambda e^t+1}{\alpha}+1\right),
\end{equation}
where $B_{k}^{(n)}(x)$ denotes Bernoulli's polynomials of order $n$ given by 
\begin{equation}
\frac{t^ne^{xt}}{(e^t-1)^n}=\sum_{k=0}^{\infty}B_{k}^{(n)}(x)\frac{t^k}{k!}
\end{equation}
and satisfies the recurrence relation
\begin{equation}\label{BerPolyrec}
B_{k}^{(n+1)}(x)=\left(1-\frac{k}{n}\right)B_{k}^{(n)}(x)+k\left(\frac{x}{n}-1\right)B_{k-1}^{(n)}(x),
\end{equation}
(cf.~\cite{LLM1980}).

By using~\eqref{newgSn}, we derive the following functional equation
\begin{equation}
\alpha^k\left(\frac{\lambda e^t}{\alpha}\right)_{k}=(\lambda e^t)_{k, \alpha}=\sum_{\ell=0}^{k}(-1)_{k-\ell,\alpha}\binom{k}{\ell}\ell! F_{\ell}(t;\alpha,\lambda).
\end{equation}
Combining~\eqref{newgSn} and~\eqref{SNFK} with the above equation, yields
\begin{equation}\label{eq:proveSY}
\sum_{n=0}^{\infty}\left(\sum_{j=0}^{k}\alpha^{k-j}S_{1}(k,j)\lambda^{j}j^n\right)\frac{t^n}{n!}=\sum_{n=0}^{\infty}\left(\sum_{\ell=0}^{k}(-1)_{k-\ell, \alpha}\ell!\binom{k}{\ell}y_{1,\alpha}^{*}(n,\ell;\lambda)\right)\frac{t^n}{n!}. 
\end{equation}
Comparing the coefficients of $\frac{t^n}{n1}$ on both sides of~\eqref{eq:proveSY}, we obtain the
following relation between the numbers $y_{1,\alpha}^{*}(n,k;\lambda)$ and Stirling numbers of the first kind $S_{1}(n, k)$:
\begin{theorem}
\begin{equation}
    \sum_{j=0}^{k}\alpha^{k-j}S_{1}(k,j)\lambda^jj^n=\sum_{\ell=0}^{k}(-1)_{k-\ell, \alpha}\ell!\binom{k}{\ell}y_{1,\alpha}^{*}(n,\ell;\lambda).
\end{equation}
\end{theorem}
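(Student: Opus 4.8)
The plan is to read off the coefficient of $\frac{t^n}{n!}$ from the functional equation
$$(\lambda e^t)_{k,\alpha}=\sum_{\ell=0}^{k}(-1)_{k-\ell,\alpha}\binom{k}{\ell}\ell!\,F_{\ell}(t;\alpha,\lambda)$$
stated just before the theorem. Before doing so I would confirm the functional equation itself: it follows from the degenerate Vandermonde identity $(x+y)_{k,\alpha}=\sum_{\ell=0}^{k}\binom{k}{\ell}(x)_{\ell,\alpha}(y)_{k-\ell,\alpha}$, applied with $x=\lambda e^t+1$ and $y=-1$ (so that $x+y=\lambda e^t$), together with $(\lambda e^t+1)_{\ell,\alpha}=\ell!\,F_{\ell}(t;\alpha,\lambda)$ read directly off the defining generating function \eqref{newgSn}. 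The Vandermonde identity in turn is a short induction on $k$ from the definition of the degenerate falling factorial.

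First I would expand the left-hand side as a power series in $t$. The rescaling $(\lambda e^t)_{k,\alpha}=\alpha^k\big(\tfrac{\lambda e^t}{\alpha}\big)_{k}$, which is immediate from $(x)_{k,\alpha}=\alpha^k(x/\alpha)_k$, combined with the defining relation \eqref{SNFK} for the Stirling numbers of the first kind, gives
$$\alpha^k\Big(\tfrac{\lambda e^t}{\alpha}\Big)_{k}=\alpha^k\sum_{j=0}^{k}S_{1}(k,j)\Big(\tfrac{\lambda e^t}{\alpha}\Big)^{j}=\sum_{j=0}^{k}\alpha^{k-j}S_{1}(k,j)\lambda^{j}e^{jt}.$$
Expanding $e^{jt}=\sum_{n\geq0}j^{n}\frac{t^n}{n!}$ and collecting, the coefficient of $\frac{t^n}{n!}$ on the left is exactly $\sum_{j=0}^{k}\alpha^{k-j}S_{1}(k,j)\lambda^{j}j^{n}$, the left-hand side of the claimed identity.

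Next I would expand the right-hand side. Substituting \eqref{newgSn} gives $\ell!\,F_{\ell}(t;\alpha,\lambda)=\sum_{n\geq0}\ell!\,y_{1,\alpha}^{*}(n,\ell;\lambda)\frac{t^n}{n!}$, and since $\ell$ ranges over the finite set $0\le\ell\le k$ I may freely interchange the sum over $\ell$ with the series in $n$. This shows the coefficient of $\frac{t^n}{n!}$ on the right is $\sum_{\ell=0}^{k}(-1)_{k-\ell,\alpha}\binom{k}{\ell}\ell!\,y_{1,\alpha}^{*}(n,\ell;\lambda)$, recovering exactly \eqref{eq:proveSY}. Equating the two coefficient expressions for every $n\ge0$ then yields the theorem.

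Everything downstream of the functional equation is routine coefficient bookkeeping, so the hard part is really just the functional equation, and even that reduces to the elementary degenerate Vandermonde identity and the rescaling $(\lambda e^t)_{k,\alpha}=\alpha^k(\lambda e^t/\alpha)_{k}$; both are direct consequences of the definition of $(x)_{k,\alpha}$. The one point worth stating carefully is that the interchange of summation is valid only because the $\ell$-sum is finite, which it is.
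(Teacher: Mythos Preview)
Your proof is correct and follows essentially the same route as the paper: derive the functional equation $(\lambda e^t)_{k,\alpha}=\sum_{\ell=0}^{k}(-1)_{k-\ell,\alpha}\binom{k}{\ell}\ell!\,F_{\ell}(t;\alpha,\lambda)$, expand the left side via the rescaling $(\lambda e^t)_{k,\alpha}=\alpha^{k}(\lambda e^t/\alpha)_{k}$ and \eqref{SNFK}, expand the right side via \eqref{newgSn}, and compare coefficients of $\tfrac{t^n}{n!}$. Your write-up is in fact more explicit than the paper's, since you supply the degenerate Vandermonde justification for the functional equation and note why the interchange of sums is legitimate.
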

\begin{remark}
It follows that for $\alpha=0$, we obtain 
\begin{equation*}
  \lambda^{k}k^n=\sum_{\ell=0}^{k}(-1)^{k-\ell}\binom{k}{\ell}\ell!y_{1}(n,\ell;\lambda)
\end{equation*}
the result given in~\cite{YS1018}.
\end{remark}
\begin{theorem}\label{thm:formntdSn}
The numbers $y_{1, \alpha}^{*}(n,k;\lambda)$ can be expressed explicitly  as follows:
 \begin{equation}\label{expforntsn}
 y_{1,\alpha}^{*}(n,k;\lambda)=\frac{1}{k!}\sum_{\ell=0}^{k}\sum_{j=0}^{\ell}\binom{\ell}{j}\alpha^{k-\ell}S_{1}(k,\ell)\lambda^{j}j^{n}
    \end{equation}
or
\begin{equation}\label{expforntsn2}
y_{1, \alpha}^{*}(n,k;\lambda)=\frac{1}{k!}\sum_{\ell=0}^{k}\sum_{j=0}^{\ell}\binom{k}{\ell}\alpha^{\ell-j}\lambda^{j}j^{n}(1)_{k-\ell, \alpha}S_{1}(\ell,j).
\end{equation}
\end{theorem}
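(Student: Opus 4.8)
The plan is to read off both explicit formulas from the generating function \eqref{newgSn} by expanding the degenerate falling factorial $(\lambda e^{t}+1)_{k,\alpha}$ in two different ways and comparing coefficients of $t^{n}/n!$. For \eqref{expforntsn} I would start from the normalization $(x)_{k,\alpha}=\alpha^{k}\left(\tfrac{x}{\alpha}\right)_{k}$ that is already implicit in \eqref{BerPoly}, and linearize the resulting ordinary falling factorial through the Stirling relation \eqref{SNFK}. Specializing $x=\lambda e^{t}+1$ gives
\begin{equation*}
(\lambda e^{t}+1)_{k,\alpha}=\sum_{\ell=0}^{k}\alpha^{k-\ell}S_{1}(k,\ell)\,(\lambda e^{t}+1)^{\ell}.
\end{equation*}
Expanding $(\lambda e^{t}+1)^{\ell}=\sum_{j=0}^{\ell}\binom{\ell}{j}\lambda^{j}e^{jt}$ by the binomial theorem and inserting $e^{jt}=\sum_{n\ge 0}j^{n}t^{n}/n!$, then dividing by $k!$, the coefficient of $t^{n}/n!$ is exactly the right-hand side of \eqref{expforntsn}.

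For \eqref{expforntsn2} the key ingredient is the degenerate Vandermonde identity
\begin{equation*}
(x+y)_{k,\alpha}=\sum_{\ell=0}^{k}\binom{k}{\ell}(x)_{\ell,\alpha}(y)_{k-\ell,\alpha},
\end{equation*}
which I would prove by multiplying $e_{\alpha}^{x}(t)\,e_{\alpha}^{y}(t)=e_{\alpha}^{x+y}(t)$ from \eqref{degexpfun} and forming the Cauchy product of the two expansions \eqref{tdegexpfun}. Taking $x=\lambda e^{t}$ and $y=1$ splits the factorial as
\begin{equation*}
(\lambda e^{t}+1)_{k,\alpha}=\sum_{\ell=0}^{k}\binom{k}{\ell}(1)_{k-\ell,\alpha}\,(\lambda e^{t})_{\ell,\alpha}.
\end{equation*}
The factor $(1)_{k-\ell,\alpha}$ is a constant depending only on $\alpha$ and $k-\ell$, and the remaining factor I would linearize once more via $(\lambda e^{t})_{\ell,\alpha}=\alpha^{\ell}\left(\tfrac{\lambda e^{t}}{\alpha}\right)_{\ell}=\sum_{j=0}^{\ell}\alpha^{\ell-j}S_{1}(\ell,j)\lambda^{j}e^{jt}$ using \eqref{SNFK}. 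Substituting $e^{jt}=\sum_{n\ge 0}j^{n}t^{n}/n!$, dividing by $k!$ and extracting the coefficient of $t^{n}/n!$ yields \eqref{expforntsn2}.

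Both derivations are routine manipulations of formal power series, so the only substantive step is the degenerate Vandermonde identity behind the second formula; once it is available, each expression follows from the same pattern of linearizing a degenerate factorial, expanding binomially, and collecting coefficients. The point that needs care is the accounting of the powers of $\alpha$: in the first route they appear as a single $\alpha^{k-\ell}$ coming from one length-$k$ degenerate factorial, while in the second route they split into $\alpha^{\ell-j}$ together with the residual degenerate factor $(1)_{k-\ell,\alpha}$, and one should verify these two accountings are genuinely consistent rather than merely formally similar. A convenient sanity check is the limit $\alpha\to 0$: in \eqref{expforntsn} the factor $\alpha^{k-\ell}$ forces $\ell=k$ with $S_{1}(k,k)=1$, and in \eqref{expforntsn2} the factor $\alpha^{\ell-j}$ forces $\ell=j$ with $S_{1}(\ell,\ell)=1$ and $(1)_{k-\ell,\alpha}\to 1$; both collapse to $\frac{1}{k!}\sum_{j}\binom{k}{j}\lambda^{j}j^{n}$, recovering \eqref{SnumbersExp}.
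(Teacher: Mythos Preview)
Your proposal is correct and follows essentially the same approach as the paper: for \eqref{expforntsn} you rescale $(\lambda e^{t}+1)_{k,\alpha}=\alpha^{k}\bigl(\tfrac{\lambda e^{t}+1}{\alpha}\bigr)_{k}$, expand via \eqref{SNFK} and the binomial theorem, and for \eqref{expforntsn2} you apply the degenerate Vandermonde identity and repeat the same linearization on $(\lambda e^{t})_{\ell,\alpha}$. Your added justification of the Vandermonde identity via $e_{\alpha}^{x}(t)e_{\alpha}^{y}(t)=e_{\alpha}^{x+y}(t)$ and the $\alpha\to 0$ sanity check are welcome bonuses not spelled out in the paper.
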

\begin{proof}
Using \eqref{SNFK}, we have 
\begin{align*}
\frac{(\lambda e^t+1)_{k,\alpha}}{k!}&=\frac{\alpha^{k}}{k!}\Big(\frac{\lambda e^t+1}{\alpha}\Big)_{k}\\
&=\frac{1}{k!}\sum_{\ell=0}^{k}\alpha^{k-\ell}S_{1}(k,\ell)(\lambda e^t+1)^{\ell}\\
&=\frac{1}{k!}\sum_{\ell=0}^{k}\alpha^{k-\ell}S_{1}(k,\ell)\sum_{j=0}^{\ell}\binom{\ell}{j}\lambda^{j}e^{jt}\\
&=\sum_{n=0}^{\infty}\Bigg(\frac{1}{k!}\sum_{\ell=0}^{k}\sum_{j=0}^{\ell}\alpha^{k-\ell}S_{1}(k,\ell)\binom{\ell}{j}\lambda^{j}j^n\Bigg)\frac{t^n}{n!}.
\end{align*}
Hence, by \eqref{newgSn}, we obtain the desired result.

To prove Eq.~\eqref{expforntsn2}, we use the fact that $(x+y)_{k, \alpha}=\sum\limits_{j=0}^{k}\binom{k}{j}(x)_{j,\alpha}(y)_{k-j, \alpha}$. Then,
\begin{align*}
\frac{\big(\lambda e^t+1\big)_{k, \alpha}}{k!}&=\frac{1}{k!}\sum_{\ell=0}^{k}\binom{k}{\ell}(\lambda e^t)_{\ell, \alpha}(1)_{k-\ell, \alpha}\\
&=\frac{1}{k!}\sum_{\ell=0}^{k}\binom{k}{\ell}\alpha^{\ell}\Big(\frac{\lambda e^t}{\alpha}\Big)_{\ell}(1)_{k-\ell, \alpha}\\
&=\sum_{n=0}^{\infty}\left(\frac{1}{k!}\sum_{\ell=0}^{k}\sum_{j=0}^{\ell}\binom{k}{\ell}\alpha^{\ell-j}\lambda^{j}j^{n}(1)_{k-\ell, \alpha}S_{1}(\ell,j)\right)\frac{t^n}{n!}.
\end{align*}
Hence the result follows.
\end{proof}
On the other hand, using the fact that $F_{k}(t;\alpha,\lambda)=\frac{\alpha^k}{k!}\Big(\frac{\lambda e^t+1}{\alpha}\Big)_{k}$ and the  identity 
\begin{equation}
(x)_{k}=\sum_{j=0}^{k}\binom{k}{j}\frac{j}{k}x^j B_{k-j}^{(k)},
\end{equation}
where $B_{n}^{(k)}$ denotes Bernoulli's numbers of order $k$, defined by 
\begin{equation}
\Big(\frac{t}{e^t-1}\Big)^k=\sum_{k=0}^{\infty}B_{n}^{(k)}\frac{t^n}{n!}
\end{equation} 
(cf.~\cite{LLM1980}), we obtain the following proposition:
\begin{proposition}
The numbers $y_{1,\alpha}^{*}(n,k;\lambda)$ are given, in relation with Bernoulli's numbers $B_{k}^{(n)}$ of order $n$, by the following 
\begin{equation}\label{expforntsn3}
y_{1,\alpha}^{*}(n,k;\lambda)=\frac{1}{k!}\sum_{j=0}^{k}\alpha^{k-j}\binom{k}{j}\frac{j}{k}B_{k-j}^{(k)}\sum_{\ell=0}^{j}\binom{j}{\ell}\lambda^{\ell}\ell^{n}.
\end{equation}
\end{proposition}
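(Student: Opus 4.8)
The plan is to read off the coefficients of a single series expansion, exactly in the style of the proof of Theorem~\ref{thm:formntdSn}. The starting point is the identity $F_{k}(t;\alpha,\lambda)=\frac{\alpha^k}{k!}\bigl(\frac{\lambda e^t+1}{\alpha}\bigr)_{k}$ recorded in~\eqref{BerPoly}, together with the stated expansion of the falling factorial $(x)_{k}=\sum_{j=0}^{k}\binom{k}{j}\frac{j}{k}x^j B_{k-j}^{(k)}$ in terms of Bernoulli numbers of order $k$. First I would substitute $x=\frac{\lambda e^t+1}{\alpha}$ into this expansion, so that $x^j=\alpha^{-j}(\lambda e^t+1)^j$, and then multiply through by the prefactor $\frac{\alpha^k}{k!}$. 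The powers of $\alpha$ combine into $\alpha^{k-j}$, yielding
\begin{equation*}
F_{k}(t;\alpha,\lambda)=\frac{1}{k!}\sum_{j=0}^{k}\alpha^{k-j}\binom{k}{j}\frac{j}{k}B_{k-j}^{(k)}\,(\lambda e^t+1)^j.
\end{equation*}

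Next I would expand $(\lambda e^t+1)^j$ by the binomial theorem as $\sum_{\ell=0}^{j}\binom{j}{\ell}\lambda^{\ell}e^{\ell t}$ and insert the Taylor series $e^{\ell t}=\sum_{n=0}^{\infty}\ell^{n}\frac{t^n}{n!}$. After interchanging the (finite) sums over $j$ and $\ell$ with the series in $n$, the coefficient of $\frac{t^n}{n!}$ becomes precisely $\frac{1}{k!}\sum_{j=0}^{k}\alpha^{k-j}\binom{k}{j}\frac{j}{k}B_{k-j}^{(k)}\sum_{\ell=0}^{j}\binom{j}{\ell}\lambda^{\ell}\ell^{n}$. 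Comparing this with the defining generating function $F_{k}(t;\alpha,\lambda)=\sum_{n=0}^{\infty}y_{1,\alpha}^{*}(n,k;\lambda)\frac{t^n}{n!}$ from~\eqref{newgSn} gives~\eqref{expforntsn3} directly.

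There is no genuine obstacle here; the argument is a routine generating-function manipulation. The only points requiring a little care are purely bookkeeping: correctly combining the $\alpha^k$ prefactor with the $\alpha^{-j}$ arising from $x^j$ to get the exponent $k-j$, and ensuring the nested sums over $j$ and $\ell$ are ordered as in the claimed formula. I would also note that the $j=0$ term drops out automatically because of the factor $\frac{j}{k}$, which is consistent with the convention for $B_{k}^{(k)}$ and does not affect the identity. Justifying the interchange of summation is immediate since all the sums over $j$ and $\ell$ are finite and the exponential series converges absolutely, so the formula follows by comparison of coefficients.
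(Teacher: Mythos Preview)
Your proof is correct and follows exactly the approach the paper sketches: substitute $x=\frac{\lambda e^t+1}{\alpha}$ into the falling-factorial expansion $(x)_k=\sum_{j=0}^{k}\binom{k}{j}\frac{j}{k}x^jB_{k-j}^{(k)}$, multiply by $\frac{\alpha^k}{k!}$, expand $(\lambda e^t+1)^j$ binomially, insert the exponential series, and compare coefficients with~\eqref{newgSn}. The paper gives only this outline, so your write-up is in fact more detailed than the original.
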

\begin{remark}
Substituting  $\alpha=0$ in Equations~\eqref{expforntsn3},~\eqref{expforntsn} and~\eqref{expforntsn2}, we obtain
$$y_{1, 0}^{*}(n,k;\lambda)=y_{1}(n,k;\lambda)=\frac{1}{k!}\sum_{j=0}^{k}\binom{k}{j}j^n\lambda^j$$
Simsek numbers \eqref{SnumbersExp}.
\end{remark}
\begin{example} Some special cases of the numbers $y_{1, \alpha}^{*}(n,k;\lambda)$ are given as follows:
\begin{itemize}
  \item $y_{1,\alpha}^{*}(n,0;\lambda)=\delta_{n0}$
  \item $y_{1,\alpha}^{*}(n,1;\lambda)=\delta_{n0}+\lambda$
  \item $y_{1,\alpha}^{*}(0,k;\lambda)=\frac{\alpha^k}{k!}\sum\limits_{\ell=0}^{k}S_{1}(k,\ell)\left(\frac{\lambda+1}{\alpha}\right)^{\ell}$
\end{itemize}
\end{example}

In the following theorem, we give a relation between the numbers $y_{1, \alpha}^{*}(n, k; \lambda)$, Simsek numbers $y_{1}(n, k; \lambda)$, Stirling numbers of the first kind $S_{1}(n, k)$ and the degenerate Stirling numbers of the second kind $S_{2.\alpha}(n, k)$.
\begin{theorem}
The numbers $y_{1, \alpha}^{*}(n,k;\lambda)$ can be expressed as follows:
    $$y_{1,\alpha}^{*}(n,k;\lambda)=\frac{1}{k!}\sum_{\ell=0}^{k}\sum_{j=0}^{\ell}S_{2,\alpha}(k,\ell)S_{1}(\ell,j)j!y_{1}(n,j;\lambda).$$
\end{theorem}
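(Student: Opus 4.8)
The plan is to start from the defining generating function \eqref{newgSn} and rewrite the degenerate falling factorial $(\lambda e^t+1)_{k,\alpha}$ in stages, peeling off first the degeneration and then the falling-factorial structure, until what remains is a combination of the integer powers $(\lambda e^t+1)^j$. These powers are exactly the generating objects for the classical Simsek numbers $y_1(n,j;\lambda)$ through \eqref{Snumbers}, so a final coefficient comparison will produce the stated identity.

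First I would apply the defining relation \eqref{dsnsc} of the degenerate Stirling numbers of the second kind, treating $x=\lambda e^t+1$ as a formal argument. Since \eqref{dsnsc} is a polynomial identity valid for every value of $x$, it converts the degenerate falling factorial into ordinary falling factorials,
\[
(\lambda e^t+1)_{k,\alpha}=\sum_{\ell=0}^{k}S_{2,\alpha}(k,\ell)\,(\lambda e^t+1)_{\ell}.
\]
Next I would apply \eqref{SNFK} with the same substitution to expand each ordinary falling factorial $(\lambda e^t+1)_{\ell}$ into powers,
\[
(\lambda e^t+1)_{\ell}=\sum_{j=0}^{\ell}S_{1}(\ell,j)\,(\lambda e^t+1)^{j},
\]
so that, after dividing by $k!$, the generating function becomes
\[
F_{k}(t;\alpha,\lambda)=\frac{1}{k!}\sum_{\ell=0}^{k}\sum_{j=0}^{\ell}S_{2,\alpha}(k,\ell)S_{1}(\ell,j)\,(\lambda e^t+1)^{j}.
\]

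The last step is to substitute the series expansion coming from the Simsek generating function \eqref{Snumbers}, namely $(\lambda e^t+1)^{j}=j!\sum_{n\geq 0}y_{1}(n,j;\lambda)\tfrac{t^n}{n!}$, and then interchange the finite double sum over $\ell$ and $j$ with the series in $t$. Comparing the coefficient of $\tfrac{t^n}{n!}$ against the definition \eqref{newgSn} yields precisely the claimed formula. I expect no genuine obstacle beyond bookkeeping: the only points that deserve care are that the Stirling expansions \eqref{dsnsc} and \eqref{SNFK} are invoked as formal polynomial identities in the function $\lambda e^t+1$ rather than in a numerical argument, and that the exchange of the finite sums with the power series in $t$ is legitimate, which holds because every manipulation takes place in the ring of formal power series.
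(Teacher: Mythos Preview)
Your proposal is correct and follows essentially the same route as the paper: expand $(\lambda e^t+1)_{k,\alpha}$ via \eqref{dsnsc} into ordinary falling factorials, then via \eqref{SNFK} into powers, insert the Simsek generating function \eqref{Snumbers}, and compare coefficients of $t^n/n!$. The paper's proof is line-for-line the same sequence of substitutions.
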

\begin{proof}
Using \eqref{SNFK}, \eqref{Snumbers}, \eqref{newgSn} and \eqref{dsnsc}, we have
\begin{align}
    \sum_{n=0}^{\infty}y_{1, \alpha}^{*}(n,k;\lambda)\frac{t^n}{n!}&=\frac{1}{k!}\sum_{\ell=0}^{k}S_{2,\alpha}(k,\ell)(\lambda e^t+1)_\ell\\
    &=\frac{1}{k!}\sum_{\ell=0}^{k}S_{2,\alpha}(k,\ell)\sum_{j=0}^{\ell}S_{1}(\ell,j)(\lambda e^t+1)^j\\
    &=\frac{1}{k!}\sum_{\ell=0}^{k}S_{2,\alpha}(k,\ell)\sum_{j=0}^{\ell}S_{1}(\ell,j)j!\sum_{n=0}^{\infty}y_{1}(n,j;\lambda)\frac{t^n}{n!}.
\end{align}
Comparing the coefficients of both sides of the above identity, we obtain the desired result.
\end{proof} 
On the other hand, the numbers $y_{1,\alpha}^*(n,k;\lambda)$ are related to the degenerate Stirling numbers of the second kind $S_{2}^*(n,k| \alpha)$ as follows:
\begin{theorem}
\begin{equation}\label{ntsnntsn}
  y_{1,\alpha}^*(n,k;\lambda)=\frac{1}{k!}\sum_{j=0}^{k}\binom{k}{j}j!\lambda^j(\lambda+1)_{k-j,\alpha}S_{2}^*\left(n,k|\frac{\alpha}{\lambda}\right).
\end{equation}
\end{theorem}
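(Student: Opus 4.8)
The plan is to argue directly from the generating-function definition \eqref{newgSn}, transforming $(\lambda e^t+1)_{k,\alpha}$ until the generating function \eqref{ntSnHKK} of $S_{2}^{*}$ becomes visible. The decisive first move is to split the base additively as
\[
\lambda e^t + 1 = \lambda(e^t-1) + (\lambda+1),
\]
which lets me apply the addition formula $(x+y)_{k,\alpha}=\sum_{j=0}^{k}\binom{k}{j}(x)_{j,\alpha}(y)_{k-j,\alpha}$ (the same identity already used in the proof of Theorem~\ref{thm:formntdSn}) with $x=\lambda(e^t-1)$ and $y=\lambda+1$. This produces the constant factor $(\lambda+1)_{k-j,\alpha}$ demanded by the claim and confines all dependence on $t$ to the single factor $(\lambda(e^t-1))_{j,\alpha}$.

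The second ingredient is a homogeneity property of the degenerate falling factorial: for $\lambda\neq 0$,
\[
(\lambda z)_{j,\alpha}=\prod_{i=0}^{j-1}(\lambda z - i\alpha)=\lambda^{j}\prod_{i=0}^{j-1}\Big(z-i\tfrac{\alpha}{\lambda}\Big)=\lambda^{j}(z)_{j,\alpha/\lambda}.
\]
Taking $z=e^t-1$ rescales the degeneracy parameter from $\alpha$ to $\alpha/\lambda$, which is exactly how $\alpha/\lambda$ enters the statement; thus $(\lambda(e^t-1))_{j,\alpha}=\lambda^{j}(e^t-1)_{j,\alpha/\lambda}$. Now I invoke \eqref{ntSnHKK} in the form $(e^t-1)_{j,\alpha/\lambda}=j!\sum_{n\ge j}S_{2}^{*}(n,j\,|\,\alpha/\lambda)\,t^n/n!$ to expand this factor as a power series in $t$.

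Assembling the pieces and dividing by $k!$ gives
\[
\frac{(\lambda e^t+1)_{k,\alpha}}{k!}=\frac{1}{k!}\sum_{j=0}^{k}\binom{k}{j}\lambda^{j}j!\,(\lambda+1)_{k-j,\alpha}\sum_{n=0}^{\infty}S_{2}^{*}\Big(n,j\,\Big|\,\tfrac{\alpha}{\lambda}\Big)\frac{t^n}{n!},
\]
and comparing the coefficient of $t^n/n!$ with \eqref{newgSn} yields the asserted identity. I note that this derivation naturally places $j$, not $k$, in the second slot of $S_{2}^{*}$; a quick consistency check at $n=0$ (where only $j=0$ survives and the formula reduces to $\tfrac{1}{k!}(\lambda+1)_{k,\alpha}$, matching the third bullet of the preceding Example via $\alpha^{k}(y/\alpha)_{k}=(y)_{k,\alpha}$) confirms that the second argument of $S_{2}^{*}$ should read $j$.

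The only genuine obstacle is finding the correct additive decomposition $\lambda e^t+1=\lambda(e^t-1)+(\lambda+1)$: any other grouping fails to expose the $e^t-1$ shape required by \eqref{ntSnHKK}, and pairing it with the homogeneity identity is what forces the parameter $\alpha/\lambda$. Once that splitting is in hand, the substitution of \eqref{ntSnHKK} and the interchange of the finite $j$-sum with the $t$-series are purely mechanical, so the remainder is bookkeeping.
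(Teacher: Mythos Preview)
Your argument is correct and follows the paper's proof essentially line for line: the additive splitting $\lambda e^t+1=\lambda(e^t-1)+(\lambda+1)$, the Vandermonde-type expansion of $(x+y)_{k,\alpha}$, the homogeneity $(\lambda z)_{j,\alpha}=\lambda^{j}(z)_{j,\alpha/\lambda}$, and the substitution of \eqref{ntSnHKK} are exactly the steps in the paper. Your observation that the derivation forces $S_{2}^{*}(n,j\,|\,\alpha/\lambda)$ rather than $S_{2}^{*}(n,k\,|\,\alpha/\lambda)$ is also correct---the paper's own proof ends with $j$ in the second slot, so the $k$ in the displayed statement is a typo.
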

\begin{proof} We have 
\begin{align*}
  \frac{\left(\lambda e^t+1\right)_{k,\alpha}}{k!} = & \frac{1}{k!}\left(\lambda(e^t-1)+\lambda+1\right)_{k,\alpha} \\
  = & \frac{1}{k!}\sum_{j=0}^{k}\binom{k}{j}\left(\lambda(e^t-1)\right)_{j,\alpha}(\lambda+1)_{k-j,\alpha}\\
  = &  \frac{1}{k!}\sum_{j=0}^{k}\binom{k}{j}\lambda^j (e^t-1)_{j,\frac{\alpha}{\lambda}}(\lambda+1)_{k-j,\alpha}\\
  = & \frac{1}{k!}\sum_{j=0}^{k}\binom{k}{j}\lambda^j j!\lambda^j (\lambda+1)_{k-j,\alpha}\sum_{n\geq 0} S_{2}^{*}\left(n,j|\frac{\alpha}{\lambda}\right)\frac{t^n}{n!}
\end{align*}
Using~\eqref{newgSn} and equating the coefficients in both sides of the above identity finishes the proof.
\end{proof}
\begin{theorem}
The numbers $y_{1,\alpha}^{*}(n,k;\lambda)$ satisfy the following recurrence formula with respect to $k$
\begin{equation}\label{rfngsmk}
 y_{1,\alpha}^{*}(n,k+1;\lambda)=\frac{1}{k+1}\bigg(\lambda\sum_{\ell=0}^{n}\binom{n}{\ell}y_{1,\alpha}^{*}(\ell,k;\lambda)+(1-k\alpha)y_{1,\alpha}^{*}(n,k;\lambda)\bigg).   
\end{equation}
\end{theorem}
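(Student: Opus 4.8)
The plan is to derive a functional equation relating $F_{k+1}$ to $F_{k}$ directly from the defining product~\eqref{newgSn}, and then to read off the recurrence by comparing coefficients of $t^n/n!$.

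First I would use the elementary factorization of the degenerate falling factorial, namely $(x)_{k+1,\alpha}=(x)_{k,\alpha}\,(x-k\alpha)$, applied at $x=\lambda e^t+1$. This gives
\begin{equation*}
(\lambda e^t+1)_{k+1,\alpha}=(\lambda e^t+1)_{k,\alpha}\bigl(\lambda e^t+1-k\alpha\bigr),
\end{equation*}
and dividing by $(k+1)!$ and invoking~\eqref{newgSn} yields the functional equation
\begin{equation*}
F_{k+1}(t;\alpha,\lambda)=\frac{1}{k+1}\bigl(\lambda e^t+(1-k\alpha)\bigr)F_{k}(t;\alpha,\lambda).
\end{equation*}

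Next I would split the right-hand side into the two pieces $\tfrac{\lambda}{k+1}e^t F_{k}$ and $\tfrac{1-k\alpha}{k+1}F_{k}$. The second piece is immediate: its coefficient of $t^n/n!$ is $(1-k\alpha)\,y_{1,\alpha}^{*}(n,k;\lambda)/(k+1)$. For the first piece I would use the standard rule for multiplying exponential generating functions: since $e^t=\sum_{m\ge 0}t^m/m!$ has all coefficients equal to $1$, the Cauchy (binomial) convolution gives
\begin{equation*}
e^t F_{k}(t;\alpha,\lambda)=\sum_{n=0}^{\infty}\Bigl(\sum_{\ell=0}^{n}\binom{n}{\ell}y_{1,\alpha}^{*}(\ell,k;\lambda)\Bigr)\frac{t^n}{n!}.
\end{equation*}

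Finally I would equate the coefficient of $t^n/n!$ in $F_{k+1}$---which is $y_{1,\alpha}^{*}(n,k+1;\lambda)$ by~\eqref{newgSn}---with the sum of the two contributions above, obtaining exactly~\eqref{rfngsmk}. The argument is essentially a one-line computation once the factorization of the falling factorial is in place; the only point that requires a little care is the binomial convolution, where the weight $\binom{n}{\ell}$ appears precisely because we are working with exponential rather than ordinary generating functions, and the $\ell$-sum must run over all $0\le\ell\le n$. I do not anticipate a genuine obstacle here, since the recurrence is a direct transcription of the factorization $(x)_{k+1,\alpha}=(x)_{k,\alpha}(x-k\alpha)$ into the coefficient language.
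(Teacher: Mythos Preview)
Your proposal is correct and follows essentially the same route as the paper: the paper also derives the functional equation $F_{k+1}(t;\alpha,\lambda)=\frac{1}{k+1}\bigl(\lambda e^t+(1-k\alpha)\bigr)F_{k}(t;\alpha,\lambda)$ from the factorization of the degenerate falling factorial (stated there with indices $k,k-1$ and then shifted), expands $e^t F_k$ as a binomial convolution, and compares coefficients of $t^n/n!$. There is no substantive difference between your argument and the paper's.
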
    
\begin{proof}
From \eqref{newgSn}, we have 
\begin{equation}\label{rel:fkfkmius}
F_{k}(t;\alpha,\lambda)=\left(\frac{\lambda e^t+\alpha-k\alpha+1}{k}\right)F_{k-1}(t;\alpha,\lambda).
\end{equation}
Hence,
\begin{align}
    \sum_{n=0}^{\infty}y_{1,\alpha}^{*}(n,k+1;\lambda)\frac{t^n}{n!}
    &=\frac{\lambda}{k+1}\sum_{m=0}^{\infty}\frac{t^m}{m!}\sum_{n=0}^{\infty}y_{1,\alpha}^{*}(n,k;\lambda)\frac{t^n}{n!}+\frac{1-k\alpha}{k+1}\sum_{n=0}^{\infty}y_{1,\alpha}^{*}(n,k;\lambda)\frac{t^n}{n!}\\
    &=\frac{1}{k+1}\sum_{n=0}^{\infty}\bigg(\lambda\sum_{\ell=0}^{n}y_{1,\alpha}^{*}(\ell,k;\lambda)\binom{n}{\ell}+(1-k\alpha)y_{1,\alpha}^{*}(n,k;\lambda)\bigg)\frac{t^n}{n!}.\label{lasteqrf}
\end{align}
By comparing the coefficients of both sides of \eqref{lasteqrf}, we have \eqref{rfngsmk}.
\end{proof}

Taking the derivative with respect to $t$ of both sides of \eqref{rel:fkfkmius}, we obtain
\begin{equation*}
F'_{k}(t;\alpha,\lambda)=\frac{\lambda e^t}{k}\left(F_{k-1}(t;\alpha,\lambda)+F'_{k-1}(t;\alpha.\lambda)\right)+\left(\frac{1+\alpha-k\alpha}{k}\right)F'_{k-1}(t;\alpha.\lambda).
\end{equation*}
Then, using \eqref{newgSn}, the above identity becomes
\begin{equation*} 
\begin{split}
\sum_{n=0}^{\infty}y_{1,\alpha}^{*}(n+1,k;\lambda)\frac{t^n}{n!}& = \sum_{n=0}^{\infty}\Bigg[\frac{\lambda}{k}\sum_{j=0}^{n}\binom{n}{j}\left(y_{1,\alpha}^{*}(j,k-1;\lambda)+y_{1,\alpha}^{*}(j+1,k-1;\lambda)\right)\\
 &+\left(\frac{1+\alpha-k\alpha}{k}\right)y_{1,\alpha}^{*}(n+1,k-1;\lambda)\Bigg]\frac{t^n}{n!}.
\end{split}
\end{equation*}
Therefore, equating the coefficients of $\frac{t^n}{n!}$, gives another recursive formula for the numbers $y_{1,\alpha}^{*}(n,k;\lambda)$ in the following theorem.
\begin{theorem}\label{thmrecforn}
The numbers $y_{1,\alpha}^{*}(n,k;\lambda)$ satisfy the following recurrence formula
\begin{equation*}
\begin{split}
 y_{1,\alpha}^{*}(n+1,k;\lambda)&=\frac{\lambda}{k}\sum_{j=0}^{n}\binom{n}{j}\bigg(y_{1,\alpha}^{*}(j,k-1;\lambda)+y_{1,\alpha}^{*}(j+1,k-1;\lambda)\bigg)\\
 &\quad\qquad +\left(\frac{1+\alpha-k\alpha}{k}\right)y_{1,\alpha}^{*}(n+1,k-1;\lambda)\text{,} \qquad \text{ for } k\geq 1   
 \end{split}
\end{equation*}
and $$y_{1,\alpha}^{*}(n,0;\lambda)=\delta_{n,0}.$$
\end{theorem}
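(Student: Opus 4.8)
The plan is to obtain the recurrence by differentiating the multiplicative relation \eqref{rel:fkfkmius} in $t$ and then comparing Taylor coefficients. I take as my starting point the identity $F_{k}(t;\alpha,\lambda)=\frac{\lambda e^t+\alpha-k\alpha+1}{k}\,F_{k-1}(t;\alpha,\lambda)$, which follows from the one-step rule $(\lambda e^t+1)_{k,\alpha}=\bigl(\lambda e^t+1-(k-1)\alpha\bigr)(\lambda e^t+1)_{k-1,\alpha}$ for the degenerate falling factorial upon dividing by $k!$. Here the special value $F_0\equiv 1$ supplies the initial condition $y_{1,\alpha}^{*}(n,0;\lambda)=\delta_{n,0}$.

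First I would apply the product rule, using $\tfrac{d}{dt}(\lambda e^t)=\lambda e^t$. Writing the prefactor as $\frac{\lambda e^t}{k}+\frac{1+\alpha-k\alpha}{k}$ and noting that its derivative contributes one extra copy of $\frac{\lambda e^t}{k}F_{k-1}$, one lands on
\[
F'_{k}(t;\alpha,\lambda)=\frac{\lambda e^t}{k}\bigl(F_{k-1}+F'_{k-1}\bigr)+\frac{1+\alpha-k\alpha}{k}\,F'_{k-1},
\]
which is precisely the differentiated identity displayed just before the theorem.

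Next I would expand each term through \eqref{newgSn}. The two facts I need are that differentiation shifts the index, so that $F'_{m}=\sum_{n\ge0}y_{1,\alpha}^{*}(n+1,m;\lambda)\frac{t^n}{n!}$, and that multiplication by $\lambda e^t$ acts as a binomial convolution; thus $\lambda e^tF_{k-1}$ produces the inner sum $\lambda\sum_{j=0}^{n}\binom{n}{j}y_{1,\alpha}^{*}(j,k-1;\lambda)$ in the coefficient of $\frac{t^n}{n!}$, while $\lambda e^tF'_{k-1}$ produces the same expression with $y_{1,\alpha}^{*}(j+1,k-1;\lambda)$ in place of $y_{1,\alpha}^{*}(j,k-1;\lambda)$. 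Reading off the coefficient of $\frac{t^n}{n!}$ on both sides and multiplying through by $k$ then yields the asserted formula, with the last term $\frac{1+\alpha-k\alpha}{k}y_{1,\alpha}^{*}(n+1,k-1;\lambda)$ coming directly, with no convolution, from $\frac{1+\alpha-k\alpha}{k}F'_{k-1}$.

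The computation is routine; the one point demanding care, and what I regard as the main obstacle, is keeping the two index shifts consistent. The derivative on the left shifts the argument to $n+1$, while the convolution coming from $\lambda e^tF'_{k-1}$ independently shifts the summation variable to $j+1$; it is exactly this second shift that turns one of the two convolution terms into $y_{1,\alpha}^{*}(j+1,k-1;\lambda)$, and conflating it with the $F_{k-1}$ term would collapse the two summands incorrectly. Once the shifts are tracked separately, the two contributions combine into the single binomial sum over $y_{1,\alpha}^{*}(j,k-1;\lambda)+y_{1,\alpha}^{*}(j+1,k-1;\lambda)$ appearing in the statement.
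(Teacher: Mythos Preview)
Your approach is essentially identical to the paper's: both differentiate the relation \eqref{rel:fkfkmius} in $t$, arrive at the same identity $F'_{k}=\frac{\lambda e^t}{k}(F_{k-1}+F'_{k-1})+\frac{1+\alpha-k\alpha}{k}F'_{k-1}$, expand via \eqref{newgSn}, and compare coefficients of $\frac{t^n}{n!}$. The only slip is the phrase ``multiplying through by $k$'': no such multiplication is needed, since the factors $\frac{\lambda}{k}$ and $\frac{1+\alpha-k\alpha}{k}$ already appear in the stated recurrence.
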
 
\begin{remark}
Note that, one can also prove Theorem \ref{thmrecforn}, using \eqref{BerPoly}, \eqref{BerPolyrec} and the derivative of Bernoulli's polynomials of order $k$
\begin{equation}
\frac{d}{dt}B_{n}^{(k)}(t)=nB_{n-1}^{(k)}(t)
\end{equation}
(cf.~\cite{LLM1980}) 
\end{remark}
We define the generating function $\phi^{*}_{n}(x|\alpha,\lambda)$ of the numbers $y_{1,\alpha}^{*}(n,k;\lambda)$ as follows:
\begin{equation}\label{gfndsn}
 \phi^{*}_{n}(x|\alpha,\lambda):=\sum\limits_{k=0}^{\infty}y_{1,\alpha}^{*}(n,k;\lambda)x^k.  
\end{equation}
In the further, we derive some formulas related to the generating function $\phi^{*}_{n}(x|\alpha,\lambda)$, such as exponential generating function, recurrence formula, integral formula, derivative formula and relation with some well-know numbers.
\begin{theorem}\label{gfphi}
For a  non-negative integer $n$, we have the following exponential generating function of $\phi^{*}_{n}(x|\alpha,\lambda)$:
   \begin{equation}\label{exgefphi}
    \sum_{n=0}^{\infty}\phi_{n}^{*}(x|\alpha,\lambda)\frac{t^n}{n!}=e_{\alpha}^{\lambda e^t+1}(x)   
   \end{equation}
\end{theorem}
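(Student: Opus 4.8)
The plan is to unwind the definition of $\phi^{*}_{n}(x|\alpha,\lambda)$ and reduce everything to the defining generating function \eqref{newgSn}. First I would substitute the definition \eqref{gfndsn} into the left-hand side of \eqref{exgefphi}, obtaining the double series
\[
\sum_{n=0}^{\infty}\phi_{n}^{*}(x|\alpha,\lambda)\frac{t^n}{n!}=\sum_{n=0}^{\infty}\left(\sum_{k=0}^{\infty}y_{1,\alpha}^{*}(n,k;\lambda)x^k\right)\frac{t^n}{n!}.
\]
Treating these as formal power series in $t$ (with coefficients that are themselves series in $x$), I would interchange the order of the $n$- and $k$-summations so as to collect, for each fixed $k$, the exponential generating function in $t$ of the numbers $y_{1,\alpha}^{*}(n,k;\lambda)$.

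The key step is that this inner sum is exactly $F_{k}(t;\alpha,\lambda)=\frac{(\lambda e^t+1)_{k,\alpha}}{k!}$ by \eqref{newgSn}, so the whole expression collapses to
\[
\sum_{k=0}^{\infty}\frac{(\lambda e^t+1)_{k,\alpha}}{k!}\,x^k.
\]
To finish, I would recognize this as a degenerate exponential. Setting $y=\lambda e^t+1$ and reading the Taylor expansion \eqref{tdegexpfun} with $x$ playing the role of the series variable gives $\sum_{k=0}^{\infty}(y)_{k,\alpha}\frac{x^k}{k!}=e_{\alpha}^{y}(x)=(1+\alpha x)^{y/\alpha}$ by \eqref{degexpfun}, which is precisely $e_{\alpha}^{\lambda e^t+1}(x)$, the right-hand side of \eqref{exgefphi}.

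The computation is short, so the only real subtlety is the interchange of the two infinite summations. Since both \eqref{gfndsn} and \eqref{newgSn} are manipulations of formal power series, I would justify the swap at the level of formal power series, where rearranging a double sum of monomials $x^k\,t^n/n!$ is permissible termwise; alternatively, for $t$ small and $|\alpha x|<1$ one has genuine convergence, so the rearrangement is also valid analytically. I do not expect any genuine obstacle beyond this bookkeeping: the essential content is simply that summing the defining generating functions $F_{k}(t;\alpha,\lambda)$ against $x^k$ reassembles the degenerate exponential function in its argument.
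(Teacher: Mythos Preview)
Your proposal is correct and follows essentially the same route as the paper: substitute \eqref{gfndsn}, swap the two sums, apply \eqref{newgSn} to identify the inner series, and then recognize the resulting $\sum_{k}(\lambda e^t+1)_{k,\alpha}\,x^k/k!$ as $e_{\alpha}^{\lambda e^t+1}(x)$ via \eqref{tdegexpfun}--\eqref{degexpfun}. The only addition is your remark on justifying the interchange of summations, which the paper leaves implicit.
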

\begin{proof} we have

    \begin{align}
        \sum_{n=0}^{\infty}\phi_{n}^{*}(x|\alpha,\lambda)\frac{t^n}{n!}        &=\sum_{n=0}^{\infty}\Big(\sum_{k=0}^{\infty}y_{1,\alpha}^{*}(n,k;\lambda)x^k\Big)\frac{t^n}{n!}\\
        &=\sum_{k=0}^{\infty}\Big(\sum_{n=0}^{\infty}y_{1,\alpha}^{*}(n,k;\lambda)\frac{t^n}{n!}\Big)x^k\\
        &=\sum_{k=0}^{\infty}(\lambda e^t+1)_{k,\alpha}\frac{x^k}{k!}\\
        &=e_{\alpha}^{\lambda e^t+1}(x),
    \end{align}
where in the third equation we used \eqref{newgSn} and in the last equation we used \eqref{degexpfun}.
\end{proof}
The following result, describe a relation between the generating functions $\phi_{n}^{*}(x|\alpha,\lambda)$ and $\phi_{n}^{*}(x|0,\lambda)$ of the numbers $y_{1,\alpha}^{*}(n,k;\lambda)$ and Simsek numbers $y_{1}(n,k;\lambda)$, respectively.
\begin{theorem}
The generating function $\phi_{n}^{*}(x|\alpha,\lambda)$ can be expressed as follows:
    $$\phi_{n}^{*}(x|\alpha,\lambda)=\sum_{k=0}^{\infty}\bigg(\frac{\log(1+\alpha x)}{\alpha}\bigg)^{k}y_{1}(n,k;\lambda)=\phi_{n}^{*}\left(\frac{\log(1+\alpha x)}{\alpha}|0,\lambda\right),$$
    where $\phi_{n}^{*}(x|0,\lambda)$ is the generating function of Simsek numbers $y_{1}(n,k;\lambda)$.
\end{theorem}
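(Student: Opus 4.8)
The plan is to rest everything on the exponential generating function from Theorem~\ref{gfphi}, namely $\sum_{n=0}^{\infty}\phi_{n}^{*}(x|\alpha,\lambda)\frac{t^n}{n!}=e_{\alpha}^{\lambda e^t+1}(x)$, and to exploit the fact that the degenerate exponential, viewed as a function of its \emph{spatial} argument $x$, collapses to an ordinary exponential after a logarithmic change of variable. Concretely, by the definition~\eqref{degexpfun} one has $e_{\alpha}^{c}(x)=(1+\alpha x)^{c/\alpha}=\exp\!\big(c\,\tfrac{\log(1+\alpha x)}{\alpha}\big)$ for any quantity $c$ independent of $x$; writing $c=\lambda e^t+1$ and setting $X:=\tfrac{\log(1+\alpha x)}{\alpha}$ gives
$$e_{\alpha}^{\lambda e^t+1}(x)=\exp\!\big((\lambda e^t+1)X\big)=e_{0}^{\lambda e^t+1}(X),$$
the last step using $e_{0}^{c}(u)=e^{cu}$. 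This single algebraic observation is the whole content; everything else is bookkeeping.

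First I would apply Theorem~\ref{gfphi} twice: once with parameter $\alpha$ evaluated at $x$, and once with parameter $0$ evaluated at the transformed point $X$. Since the two right-hand sides coincide by the identity just displayed, the two generating functions in $t$ must agree:
$$\sum_{n=0}^{\infty}\phi_{n}^{*}(x|\alpha,\lambda)\frac{t^n}{n!}=e_{\alpha}^{\lambda e^t+1}(x)=e_{0}^{\lambda e^t+1}(X)=\sum_{n=0}^{\infty}\phi_{n}^{*}(X|0,\lambda)\frac{t^n}{n!}.$$
Comparing the coefficients of $\tfrac{t^n}{n!}$ then yields the second asserted equality, $\phi_{n}^{*}(x|\alpha,\lambda)=\phi_{n}^{*}\big(\tfrac{\log(1+\alpha x)}{\alpha}\,\big|\,0,\lambda\big)$.

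For the first equality I would simply unfold the definition~\eqref{gfndsn} of $\phi_{n}^{*}(\cdot|0,\lambda)$ at the point $X$, obtaining $\phi_{n}^{*}(X|0,\lambda)=\sum_{k=0}^{\infty}y_{1,0}^{*}(n,k;\lambda)X^{k}$, and then invoke the identification $y_{1,0}^{*}(n,k;\lambda)=y_{1}(n,k;\lambda)$ noted below~\eqref{newgSn} to replace the degenerate numbers by the genuine Simsek numbers. Substituting back $X=\tfrac{\log(1+\alpha x)}{\alpha}$ produces exactly $\sum_{k=0}^{\infty}\big(\tfrac{\log(1+\alpha x)}{\alpha}\big)^{k}y_{1}(n,k;\lambda)$, which is the left member of the claimed chain.

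I do not expect a genuine obstacle here; the only point deserving a word of care is the legitimacy of substituting the series $X=\tfrac{\log(1+\alpha x)}{\alpha}=x-\tfrac{\alpha}{2}x^{2}+\cdots$, which has vanishing constant term, into the generating functions. Because the comparison is performed coefficient-by-coefficient in the formal variable $t$, with each $\phi_{n}^{*}$ a fixed power series in its spatial argument, this is a well-defined composition of formal power series and no convergence issue intervenes; alternatively, for $|\alpha x|<1$ all series converge and the identities hold analytically.
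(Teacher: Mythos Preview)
Your proof is correct and follows essentially the same route as the paper's: both start from Theorem~\ref{gfphi}, rewrite $e_{\alpha}^{\lambda e^t+1}(x)=(1+\alpha x)^{(\lambda e^t+1)/\alpha}=\exp\!\big((\lambda e^t+1)\tfrac{\log(1+\alpha x)}{\alpha}\big)$, and then compare coefficients of $t^n/n!$. The only cosmetic difference is that the paper expands the exponential directly as $\sum_{k}\big(\tfrac{\log(1+\alpha x)}{\alpha}\big)^{k}\tfrac{(\lambda e^t+1)^k}{k!}$ and invokes the Simsek generating function~\eqref{Snumbers}, whereas you recognize the same expression as $e_{0}^{\lambda e^t+1}(X)$ and apply Theorem~\ref{gfphi} a second time at $\alpha=0$; these are the same computation repackaged.
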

\begin{proof}
By Theorem~\ref{gfphi}, we have
\begin{align}
    \sum_{n=0}^{\infty}\phi_{n}^{*}(x|\alpha,\lambda)\frac{t^n}{n!}&=e_{\alpha}^{\lambda e^t+1}(x)\\
    &=(1+\alpha x)^{\frac{\lambda e^t+1}{\alpha}}\\
    &=e^{\frac{\lambda e^t+1}{\alpha}\log(1+\alpha x)}\\
    &=\sum_{k=0}^{\infty}\frac{1}{\alpha^k}\big(\log(1+\alpha x)\big)^k\frac{(\lambda e^t+1)^k}{k!}\\
    &=\sum_{n=0}^{\infty}\sum_{k=0}^{\infty}\Big(\frac{\log(1+\alpha x)}{\alpha}\Big)^{k}y_{1}(n,k;\lambda)\frac{t^n}{n!}\label{lasteqgf}.
\end{align}
Hence, the result follows by comparing the coefficients of both sides of \eqref{lasteqgf}.
\end{proof}
\begin{theorem}
For $n\geq 0$, the generating function $\phi_{n}^*(x,\alpha|\lambda)$ satisfies the following recursive formula:
    \begin{equation}\label{recforphi}
        \phi_{n+1}^*(x|\alpha,\lambda)=\frac{\lambda}{\alpha}\log(1+\alpha x)\sum_{\ell=0}^{n}\binom{n}{\ell}\phi^{*}_\ell(x|\alpha,\lambda)
    \end{equation}
\end{theorem}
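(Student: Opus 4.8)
The plan is to exploit the closed form for the bivariate generating function established in Theorem~\ref{gfphi}, namely $\sum_{n=0}^{\infty}\phi_n^*(x|\alpha,\lambda)\frac{t^n}{n!}=e_\alpha^{\lambda e^t+1}(x)$, and to differentiate it with respect to $t$. Writing $G(t):=e_\alpha^{\lambda e^t+1}(x)=(1+\alpha x)^{\frac{\lambda e^t+1}{\alpha}}=\exp\!\big(\tfrac{\lambda e^t+1}{\alpha}\log(1+\alpha x)\big)$, the left-hand side differentiates termwise and, after the re-indexing $n\mapsto n+1$, becomes $\sum_{n=0}^{\infty}\phi_{n+1}^*(x|\alpha,\lambda)\frac{t^n}{n!}$.

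First I would compute $G'(t)$ by the chain rule applied to the exponential form. Since only the summand $\lambda e^t$ in the exponent depends on $t$, one obtains $G'(t)=\frac{\lambda}{\alpha}\log(1+\alpha x)\,e^t\,G(t)$. This functional-differential identity is really the entire content of the statement, and the remaining work is purely formal extraction of coefficients.

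Next I would expand the two factors on the right as exponential generating series, $e^t=\sum_{m\geq 0}\frac{t^m}{m!}$ and $G(t)=\sum_{\ell\geq 0}\phi_\ell^*(x|\alpha,\lambda)\frac{t^\ell}{\ell!}$, and form their Cauchy product. The binomial-convolution rule for products of exponential generating functions turns $e^t G(t)$ into $\sum_{n\geq 0}\big(\sum_{\ell=0}^n\binom{n}{\ell}\phi_\ell^*(x|\alpha,\lambda)\big)\frac{t^n}{n!}$. Multiplying through by the scalar factor $\frac{\lambda}{\alpha}\log(1+\alpha x)$ and equating the coefficient of $\frac{t^n}{n!}$ with that coming from the differentiated left-hand side yields exactly~\eqref{recforphi}.

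The argument is essentially routine once Theorem~\ref{gfphi} is in hand, so I do not expect a genuine obstacle. The only point requiring a little care is the bookkeeping of the two index shifts, the derivative shift $n\mapsto n+1$ on the left and the convolution index on the right, together with the observation that $x,\alpha,\lambda$ are held fixed, so that $\frac{\lambda}{\alpha}\log(1+\alpha x)$ is a constant in $t$ and passes through the coefficient comparison unchanged.
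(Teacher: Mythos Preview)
Your proposal is correct and matches the paper's own proof essentially step for step: differentiate the identity of Theorem~\ref{gfphi} in $t$, obtain $\frac{d}{dt}e_{\alpha}^{\lambda e^t+1}(x)=\frac{\lambda}{\alpha}\log(1+\alpha x)\,e^t\,e_{\alpha}^{\lambda e^t+1}(x)$, expand both sides as exponential generating functions, take the Cauchy product, and equate coefficients of $\frac{t^n}{n!}$.
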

\begin{proof}
 Taking the derivative with respect to $t$ in both sides of \eqref{exgefphi}, we obtain 
\begin{equation}\label{dertphi2}
 \frac{d}{dt}\sum_{n=0}^{\infty}\phi^{*}_{n}(x|\alpha,\lambda)\frac{t^n}{n!}=\sum_{n=0}^{\infty}\phi^{*}_{n+1}(x|\alpha,\lambda)\frac{t^n}{n!},   
\end{equation}
and 
\begin{align}
        \frac{d}{dt}\left(e_{\alpha}^{\lambda e^t+1}(x)\right)&=\frac{\lambda}{\alpha}e^t\log(1+\alpha x)e_{\alpha}^{\lambda e^t+1}(x)\\
        &=\frac{\lambda}{\alpha}\log(1+\alpha x)\sum_{j=0}^{\infty}\frac{t^j}{j!}\sum_{\ell=0}^{\infty}\phi_{\ell}^*(x|\alpha,\lambda)\frac{t^\ell}{\ell!}\\
        &=\frac{\lambda}{\alpha}\log(1+\alpha x)\sum_{n=0}^{\infty}\sum_{\ell=0}^{n}\binom{n}{\ell}\phi_{\ell}^*(x|\alpha, \lambda)\frac{t^n}{n!}\label{dertphi1}.
    \end{align}
Equating the coefficients of $\frac{t^n}{n!}$ in \eqref{dertphi1} and \eqref{dertphi2} gives the desired result.
\end{proof}
 We express the generating function $\phi^{*}_{n}(x|\alpha,\lambda)$ in relation with the degenerate first kind Apostol-Euler numbers $E_{n}^{(k)}(\lambda|\alpha)$ of order $k$ defined in \cite{LO2023} as follows 
\begin{equation}\label{dfkAEn}
 \bigg(\frac{2}{\lambda e^{\frac{\log(1+\alpha t)}{\alpha}}+1}\bigg)^k=\sum_{n=0}^{\infty}E_{n}^{(k)}(\lambda|\alpha)\frac{t^n}{n!}.
\end{equation}  
\begin{theorem}
For a non-negative integer $n$, we have 
\begin{equation}
    \sum\limits_{m=0}^{n}\binom{n}{m}E_{n-m}^{(1)}(\lambda|0)\frac{d}{dx}\phi^{*}_{m}(x|\alpha,\lambda)=\frac{2}{1+\alpha x}\phi^{*}_{n}(x|\alpha,\lambda).
  \end{equation}  
\end{theorem}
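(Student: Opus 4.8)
The plan is to pass to exponential generating functions in the variable $t$ and reduce the identity to a single cancellation. First I would observe that the binomial convolution on the left-hand side is exactly the coefficient of $t^n/n!$ in the product of the two exponential generating functions $\sum_{n\geq0}E_{n}^{(1)}(\lambda|0)\frac{t^n}{n!}$ and $\sum_{m\geq0}\frac{d}{dx}\phi^{*}_{m}(x|\alpha,\lambda)\frac{t^m}{m!}$. By~\eqref{dfkAEn} specialized to $\alpha=0$ (where $e^{\frac{\log(1+\alpha t)}{\alpha}}\to e^{t}$) the first factor equals $\frac{2}{\lambda e^{t}+1}$, while by Theorem~\ref{gfphi}, after interchanging $\frac{d}{dx}$ with the summation over $m$, the second factor equals $\frac{d}{dx}e_{\alpha}^{\lambda e^{t}+1}(x)$.

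Next I would compute this $x$-derivative explicitly from the closed form $e_{\alpha}^{\lambda e^{t}+1}(x)=(1+\alpha x)^{\frac{\lambda e^{t}+1}{\alpha}}$, obtaining
\[
\frac{d}{dx}e_{\alpha}^{\lambda e^{t}+1}(x)=(\lambda e^{t}+1)(1+\alpha x)^{\frac{\lambda e^{t}+1}{\alpha}-1}=\frac{\lambda e^{t}+1}{1+\alpha x}\,e_{\alpha}^{\lambda e^{t}+1}(x).
\]
The decisive point is that the factor $\lambda e^{t}+1$ produced by this differentiation is exactly the reciprocal of the factor $\frac{2}{\lambda e^{t}+1}$ coming from the Apostol--Euler generating function, so their product collapses:
\[
\frac{2}{\lambda e^{t}+1}\cdot\frac{\lambda e^{t}+1}{1+\alpha x}\,e_{\alpha}^{\lambda e^{t}+1}(x)=\frac{2}{1+\alpha x}\,e_{\alpha}^{\lambda e^{t}+1}(x).
\]

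Finally, invoking Theorem~\ref{gfphi} once more to rewrite $e_{\alpha}^{\lambda e^{t}+1}(x)=\sum_{n\geq0}\phi^{*}_{n}(x|\alpha,\lambda)\frac{t^n}{n!}$, the right-hand side above is the exponential generating function of $\frac{2}{1+\alpha x}\phi^{*}_{n}(x|\alpha,\lambda)$, and comparing the coefficients of $t^n/n!$ yields the claimed identity. I do not expect a serious analytic obstacle here: the computation is short once the convolution is recognized. The only points requiring care are the specialization that matches $E_{n}^{(1)}(\lambda|0)$ with $\frac{2}{\lambda e^{t}+1}$ in~\eqref{dfkAEn}, and the justification that $\frac{d}{dx}$ commutes with the $t$-expansion, both of which are routine at the level of formal power series. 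The genuine engine of the argument is the cancellation of $\lambda e^{t}+1$, which is precisely what makes the first-order Apostol--Euler numbers the natural partners of the derivative $\frac{d}{dx}\phi^{*}_{n}(x|\alpha,\lambda)$.
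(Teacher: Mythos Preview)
Your proposal is correct and follows essentially the same route as the paper's proof: both arguments multiply the generating function $\frac{d}{dx}e_{\alpha}^{\lambda e^{t}+1}(x)=\frac{\lambda e^{t}+1}{1+\alpha x}\,e_{\alpha}^{\lambda e^{t}+1}(x)$ by the Apostol--Euler generating function $\frac{2}{\lambda e^{t}+1}$, cancel the factor $\lambda e^{t}+1$, and then compare coefficients of $t^n/n!$ using Theorem~\ref{gfphi}. The only cosmetic difference is that the paper phrases the multiplication as ``multiplying both sides of~\eqref{derxphiarr1} by $\frac{2}{1+\lambda e^{t}}$'' whereas you start from the convolution and assemble the two factors; the underlying computation is identical.
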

\begin{proof}
Multiplying by $\frac{2}{1+\lambda e^{t}}$ in both sides of \eqref{derxphiarr1}, we obtain
   \begin{equation}\label{derxphipro}
     \frac{2}{1+\lambda e^{t}}\sum_{n=0}^{\infty}\frac{d}{dx}\phi^{*}_{n}(x|\alpha,\lambda)\frac{t^n}{n!}=\frac{2}{1+\alpha x}e_{\alpha}^{\lambda e^{t}+1}(x) 
   \end{equation} 
The right-hand side of \eqref{derxphipro} becomes
\begin{equation}\label{rhandside}
   \frac{2}{1+\alpha x}\sum_{n=0}^{\infty}\phi^{*}_{n}(x|\alpha,\lambda)\frac{t^{n}}{n!}. 
\end{equation}
Using~\eqref{dfkAEn}, the left-hand side of \eqref{derxphipro} is written as follows
\begin{align}
 \frac{2}{1+\lambda e^{t}}\sum_{n=0}^{\infty}\frac{d}{dx}\phi^{*}_{n}(x|\alpha,\lambda)\frac{t^n}{n!}&=\sum_{j=0}^{\infty}E^{(1)}_{j}(\lambda|0)\frac{t^{j}}{j!}\sum_{m=0}^{\infty}\frac{d}{dx}\phi^{*}_{m}(x|\alpha,\lambda)\frac{t^m}{m!}\\
 &=\sum_{n=0}^{\infty}\sum_{m=0}^{n}\binom{n}{m}E^{(1)}_{n-m}(\lambda|0)\frac{d}{dx}\phi_{m}^{*}(x|\alpha,\lambda)\frac{t^{n}}{n!}\label{lhandside}.
\end{align}
Hence, the result follows from \eqref{rhandside} and \eqref{lhandside}.
\end{proof}
Using Theorem \ref{gfphi} and the fat that   
\begin{equation}\label{derxphi1}
      \frac{d}{dx}(e_{\alpha}^{\lambda e^t+1}(x))=\frac{\lambda e^t+1}{1+\alpha x}e_{\alpha}^{\lambda e^t+1}(x),
\end{equation}
we get
\begin{align}
    \sum_{n=0}^{\infty}\frac{d}{dx}\phi^{*}_{n}(x|\alpha,\lambda)\frac{t^n}{n!}\label{derxphiarr1}&=\frac{1}{1+\alpha x}\bigg((1+ \lambda e^{t})e_{\alpha}^{e^{\lambda t+1}}(x)\bigg)\\ 
    &=\frac{1}{1+\alpha x}\bigg((1+ \lambda e^{t})\sum_{\ell=0}^{\infty}\phi^{*}_{\ell}(x|\alpha,\lambda)\frac{t^\ell}{\ell!}\bigg)\\ 
    &=\frac{1}{1+\alpha x}\sum_{n=0}^{\infty}\bigg(\lambda\sum_{\ell=0}^{n}\binom{n}{\ell}\phi^{*}_{\ell}(x|\alpha,\lambda)+\phi^{*}_{n}(x|\alpha,\lambda)\bigg)\frac{t^n}{n!}\text{,}  
\end{align}
which gives, by equating the coefficients of $\frac{t^n}{n!}$ in both sides of the above identity, the following derivative formula for the generating function $\phi^{*}_{n}(x|\alpha,\lambda)$.
\begin{theorem}
For $n\geq 0$, we have  
    $$\frac{d}{dx}\phi^{*}_{n}(x|\alpha,\lambda)=\frac{1}{1+\alpha x}\Bigg(\lambda \sum_{\ell=0}^{n}\binom{n}{\ell}\phi^{*}_{\ell}(x|\alpha,\lambda)+\phi^{*}_{n}(x|\alpha,\lambda)\Bigg).$$
\end{theorem}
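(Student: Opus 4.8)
The plan is to read off the differential relation by differentiating, with respect to $x$, the exponential generating function furnished by Theorem~\ref{gfphi}, and then comparing coefficients of $t^n/n!$. Both sides of \eqref{exgefphi} are smooth in $x$ (for $x$ near $0$ with $\alpha,\lambda$ fixed, and in any case as formal power series in $t$ with coefficients smooth in $x$), so I would differentiate term by term.

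First I would apply $\frac{d}{dx}$ to the left-hand side of \eqref{exgefphi}, which yields $\sum_{n=0}^{\infty}\frac{d}{dx}\phi^{*}_{n}(x|\alpha,\lambda)\frac{t^n}{n!}$. For the right-hand side I would invoke the elementary derivative formula \eqref{derxphi1}, which is nothing more than the power rule applied to $(1+\alpha x)^{(\lambda e^t+1)/\alpha}$: the factor $1/\alpha$ in the exponent cancels the $\alpha$ produced by the chain rule, leaving the prefactor $\frac{\lambda e^t+1}{1+\alpha x}$ times $e_{\alpha}^{\lambda e^t+1}(x)$.

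Next I would substitute the generating function identity \eqref{exgefphi} back in for $e_{\alpha}^{\lambda e^t+1}(x)$ on the right, turning it into $\frac{1}{1+\alpha x}\big(1+\lambda e^t\big)\sum_{\ell=0}^{\infty}\phi^{*}_{\ell}(x|\alpha,\lambda)\frac{t^\ell}{\ell!}$. Splitting the factor $1+\lambda e^t$, the summand $1$ contributes $\sum_{n}\phi^{*}_{n}(x|\alpha,\lambda)\,t^n/n!$ unchanged, while for $\lambda e^t$ I would expand $e^t=\sum_m t^m/m!$ and form the Cauchy product with $\sum_\ell \phi^{*}_\ell(x|\alpha,\lambda)\,t^\ell/\ell!$; the coefficient of $t^n/n!$ in that product is the binomial convolution $\lambda\sum_{\ell=0}^{n}\binom{n}{\ell}\phi^{*}_{\ell}(x|\alpha,\lambda)$. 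This is exactly the chain of equalities culminating in \eqref{derxphiarr1}.

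Finally, equating the coefficients of $t^n/n!$ on the two sides gives the asserted formula. I expect no genuine obstacle here: the only two points that merit a word are the legitimacy of differentiating the series termwise in $x$ and the binomial bookkeeping hidden in the Cauchy product, both of which are routine.
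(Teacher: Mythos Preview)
Your proposal is correct and follows essentially the same approach as the paper: differentiate the generating-function identity \eqref{exgefphi} in $x$ using \eqref{derxphi1}, substitute back, and compare coefficients of $t^n/n!$ after forming the Cauchy product with $e^t$. The paper's derivation is exactly this chain of equalities (the lines leading up to and including \eqref{derxphiarr1}).
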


By virtue of Theorem \ref{gfphi}, we have 
\begin{align}
    \sum_{n=0}^{\infty}\int_{0}^{x}\phi^{*}_{n}(y|\alpha,\lambda)dy\frac{t^n}{n!}&=\int_{0}^{x}e_{\alpha}^{\lambda e^{t}+1}(y)dy\\
    &=\Bigg[\frac{1+\alpha y}{\lambda e^t+1}e_{\alpha}^{\lambda e^t+1}(y)\Bigg]_{0}^{x}\\
    &=\frac{1}{\lambda e^t+1}\Big((1+\alpha x)e_{\alpha}^{\lambda e^t+1}(x)-1\Big)\label{integeq2}\\
    &=\frac{1+\alpha x}{2}\sum_{n=0}^{\infty}E_{n}^{(1)}(\lambda|0)\frac{t^n}{n!}\sum_{l=0}^{\infty}\phi^{*}_{l}(x,\alpha|\lambda)\frac{t^l}{l!}-\frac{1}{2}\sum_{n=0}^{\infty}E_{n}^{(1)}(\lambda|0)\frac{t^n}{n!}\\
    &=\sum_{n=0}^{\infty}\bigg(\frac{1+\alpha x}{2}\sum_{l=0}^{n}\binom{n}{l}E_{n-l}^{(1)}(\lambda|0)\phi^{*}_{l}(x,\alpha|\lambda)-\frac{1}{2}E_{n}^{(1)}(\lambda|0)\bigg)\frac{t^n}{n!}.
\end{align}
Comparing the coefficient of both sides of the above identity, we get the following integral formula for the generating function $\phi^{*}_{n}(x|\alpha,\lambda)$:
\begin{theorem}\label{integphi}
For $n\geq 1$, one has
\begin{equation}
    \int_{0}^{x}\phi^{*}_{n}(y|\alpha,\lambda)dy=\frac{1+\alpha x}{2}\sum_{l=0}^{n}\binom{n}{l}E_{n-l}^{(1)}(\lambda|0)\phi^{*}_{l}(x|\alpha,\lambda)-\frac{1}{2}E_{n}^{(1)}(\lambda|0)
\end{equation}
\end{theorem}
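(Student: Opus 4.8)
The plan is to start from the exponential generating function established in Theorem~\ref{gfphi}, namely $\sum_{n=0}^{\infty}\phi^{*}_{n}(x|\alpha,\lambda)\frac{t^n}{n!}=e_{\alpha}^{\lambda e^t+1}(x)$, and to integrate both sides in the spatial variable over $[0,x]$. On the left I would interchange the (locally uniformly convergent) summation with the integral, so that the coefficient of $\frac{t^n}{n!}$ becomes exactly $\int_{0}^{x}\phi^{*}_{n}(y|\alpha,\lambda)\,dy$, which is the object the theorem concerns. Everything then reduces to evaluating $\int_{0}^{x}e_{\alpha}^{\lambda e^t+1}(y)\,dy$ in closed form and re-expanding the result as a power series in $t$.

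For the right-hand side I would treat $t$ as a parameter and write $e_{\alpha}^{\lambda e^t+1}(y)=(1+\alpha y)^{\frac{\lambda e^t+1}{\alpha}}$, a pure power of $1+\alpha y$. Antidifferentiating in $y$ produces a multiple of $(1+\alpha y)^{\frac{\lambda e^t+1}{\alpha}+1}=(1+\alpha y)\,e_{\alpha}^{\lambda e^t+1}(y)$, so that after evaluating between $0$ and $x$ one reaches an expression of the shape $\frac{1}{\lambda e^t+1}\big((1+\alpha x)\,e_{\alpha}^{\lambda e^t+1}(x)-1\big)$. The crucial structural observation is that the prefactor $\frac{1}{\lambda e^t+1}$ is, up to the constant $\tfrac12$, exactly half the generating function $\frac{2}{\lambda e^t+1}=\sum_{n\ge 0}E_{n}^{(1)}(\lambda|0)\frac{t^n}{n!}$ of the (non-degenerate) first-kind Apostol--Euler numbers from~\eqref{dfkAEn} specialized at $\alpha=0$; this is precisely what forces $E_{n-l}^{(1)}(\lambda|0)$ to appear in the final identity.

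With that in hand the remaining work is bookkeeping. I would substitute the series for $\frac{1}{\lambda e^t+1}$, invoke Theorem~\ref{gfphi} once more to replace $e_{\alpha}^{\lambda e^t+1}(x)$ by $\sum_{l}\phi^{*}_{l}(x|\alpha,\lambda)\frac{t^l}{l!}$, and carry out the two Cauchy products: one for the product term $\frac{1+\alpha x}{2}\big(\sum E_{\cdot}\big)\big(\sum\phi^{*}_{\cdot}\big)$ and one for the lone $-\tfrac12\sum E_{\cdot}$ coming from the $-1$ boundary contribution. Collecting the coefficient of $\frac{t^n}{n!}$ then yields $\frac{1+\alpha x}{2}\sum_{l=0}^{n}\binom{n}{l}E_{n-l}^{(1)}(\lambda|0)\phi^{*}_{l}(x|\alpha,\lambda)-\tfrac12 E_{n}^{(1)}(\lambda|0)$, which matches the claim; the comparison of coefficients is legitimate since both sides are formal power series in $t$. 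The restriction $n\ge 1$ is natural here because the $n=0$ level is the degenerate boundary case where the constant coming from the lower limit $y=0$ must be treated separately.

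The step I expect to be the main obstacle is the closed-form antidifferentiation on the right-hand side. One must use that $\frac{d}{dy}\,e_{\alpha}^{\lambda e^t+1}(y)=\frac{\lambda e^t+1}{1+\alpha y}\,e_{\alpha}^{\lambda e^t+1}(y)$ (the $y$-derivative of a power of $1+\alpha y$), so that the antiderivative is controlled by the exponent $\frac{\lambda e^t+1}{\alpha}$, and then keep precise track of the rational prefactor in $\lambda e^t+1$ that survives to become the Apostol--Euler generating function. Once that prefactor is correctly identified as $\tfrac12\cdot\frac{2}{\lambda e^t+1}$, the two Cauchy products and the coefficient extraction are entirely routine.
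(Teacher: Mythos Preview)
Your plan is essentially identical to the paper's own argument: the paper also integrates the identity of Theorem~\ref{gfphi} over $[0,x]$, writes the antiderivative as $\big[\frac{1+\alpha y}{\lambda e^t+1}\,e_{\alpha}^{\lambda e^t+1}(y)\big]_{0}^{x}$, recognises $\frac{1}{\lambda e^t+1}=\tfrac12\sum_{n\ge 0}E_{n}^{(1)}(\lambda|0)\frac{t^n}{n!}$, forms the two Cauchy products, and compares coefficients of $\frac{t^n}{n!}$. There is no difference in strategy or in the sequence of steps.
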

The following result from \cite{KNB2005, TKDSK2022} will be used in the proof of the next theorem: Let $f(x)=\sum_{n=0}^{\infty}a_nx^n$ and $g(x)=\sum_{k=0}^{\infty}b_kx^k$ be power series which are convergent on some open disk centered at the origin. Then 
\begin{equation}\label{Formpowerseries}
    \sum_{k=0}^{\infty}\frac{g^{(k)}(0)}{k!}f(k)x^k=\sum_{n=0}^{\infty}\frac{f^{(n)}(0)}{n!}\sum_{k=0}^{n}S_{2}(n,k)g^{(k)}(x)x^k.
\end{equation}
\begin{theorem}
For $n\geq 0$, on has
$$\sum_{m=0}^{\infty}y_{1,\alpha}^{*}(n,m;\lambda)f(m)x^m=\sum_{j=0}^{n}\binom{n}{j}\sum_{m=0}^{\infty}\sum_{k=0}^{m}S_{2}(m,k)\Big(\frac{x}{1+\alpha x}\Big)^kk!\frac{f^{(m)}(0)}{m!}y_{1, \alpha}^{*}(j,k;\lambda)\phi^{*}_{n-j}(x|\alpha,\lambda).$$
\end{theorem}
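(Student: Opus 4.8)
The plan is to apply the transfer identity \eqref{Formpowerseries} with the auxiliary function $g(x)=\phi^{*}_{n}(x|\alpha,\lambda)$. By the definition \eqref{gfndsn} the Taylor coefficients of $g$ are exactly $g^{(k)}(0)/k!=y_{1,\alpha}^{*}(n,k;\lambda)$, so the left-hand side of \eqref{Formpowerseries} becomes precisely $\sum_{m=0}^{\infty}y_{1,\alpha}^{*}(n,m;\lambda)f(m)x^{m}$, the quantity we want (I apply \eqref{Formpowerseries} with its outer summation index renamed to $m$, to avoid clashing with the fixed parameter $n$). The right-hand side of \eqref{Formpowerseries} then involves the $x$-derivatives $g^{(k)}(x)=\frac{d^{k}}{dx^{k}}\phi^{*}_{n}(x|\alpha,\lambda)$, so the heart of the argument is to express these derivatives in closed form as a convolution of the numbers $y_{1,\alpha}^{*}$ against the functions $\phi^{*}$.

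To obtain such a closed form I would differentiate the exponential generating function of Theorem \ref{gfphi}. Writing $e_{\alpha}^{\lambda e^{t}+1}(x)=(1+\alpha x)^{(\lambda e^{t}+1)/\alpha}$ as in \eqref{degexpfun} and differentiating $k$ times in $x$, the elementary rule $\frac{d^{k}}{dx^{k}}(1+\alpha x)^{p}=(p)_{k}\alpha^{k}(1+\alpha x)^{p-k}$, combined with the relation $(x)_{k,\alpha}=\alpha^{k}(x/\alpha)_{k}$ between the degenerate and ordinary falling factorials, yields
\begin{equation*}
\frac{d^{k}}{dx^{k}}e_{\alpha}^{\lambda e^{t}+1}(x)=\frac{(\lambda e^{t}+1)_{k,\alpha}}{(1+\alpha x)^{k}}\,e_{\alpha}^{\lambda e^{t}+1}(x).
\end{equation*}
Next I would substitute $(\lambda e^{t}+1)_{k,\alpha}=k!\,F_{k}(t;\alpha,\lambda)=k!\sum_{j\ge 0}y_{1,\alpha}^{*}(j,k;\lambda)\tfrac{t^{j}}{j!}$ from \eqref{newgSn} and expand the remaining factor by \eqref{exgefphi}; forming the Cauchy product in $t$ and reading off the coefficient of $t^{n}/n!$ on both sides (the left side being $\sum_{n}\frac{d^{k}}{dx^{k}}\phi^{*}_{n}(x|\alpha,\lambda)\tfrac{t^{n}}{n!}$ by termwise differentiation) gives the desired derivative formula
\begin{equation*}
\frac{d^{k}}{dx^{k}}\phi^{*}_{n}(x|\alpha,\lambda)=\frac{k!}{(1+\alpha x)^{k}}\sum_{j=0}^{n}\binom{n}{j}y_{1,\alpha}^{*}(j,k;\lambda)\,\phi^{*}_{n-j}(x|\alpha,\lambda).
\end{equation*}

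Finally I would insert this expression for $g^{(k)}(x)$ into the right-hand side of \eqref{Formpowerseries}, combine the surviving powers via $x^{k}(1+\alpha x)^{-k}=\bigl(\tfrac{x}{1+\alpha x}\bigr)^{k}$, and interchange the order of summation so that the sum over $j$ (carrying $\binom{n}{j}$ and the factor $\phi^{*}_{n-j}$, which is independent of the remaining indices $m,k$) is pulled to the front. Matching the result against the left-hand side $\sum_{m}y_{1,\alpha}^{*}(n,m;\lambda)f(m)x^{m}$ produces exactly the claimed identity. I expect the main obstacle to be the derivative formula of the middle step: one must handle the degenerate falling factorial $(\lambda e^{t}+1)_{k,\alpha}$ carefully under $k$-fold differentiation in $x$ and keep the two $t$-expansions bookkept correctly through the Cauchy product. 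Once that convolution identity is in hand, the application of \eqref{Formpowerseries} and the concluding rearrangement of sums are routine (and the convergence hypothesis of \eqref{Formpowerseries} is met because $\phi^{*}_{n}(\,\cdot\,|\alpha,\lambda)$ is analytic near $x=0$, being a $t$-coefficient of $(1+\alpha x)^{(\lambda e^{t}+1)/\alpha}$).
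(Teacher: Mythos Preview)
Your argument is correct and follows essentially the same route as the paper: the paper applies \eqref{Formpowerseries} to $g(x)=e_{\alpha}^{\lambda e^{t}+1}(x)$, uses the same $k$-th derivative computation you carry out (its displayed formula \eqref{kderiv} has a typo---the denominator should be $(1+\alpha x)^{k}$, as you have it), expands via \eqref{newgSn} and \eqref{exgefphi}, and then compares coefficients of $t^{n}/n!$. Your only cosmetic difference is that you extract the $t^{n}$ coefficient first---obtaining the convolution formula for $\tfrac{d^{k}}{dx^{k}}\phi^{*}_{n}$---and then apply \eqref{Formpowerseries} directly to $g=\phi^{*}_{n}$, which is arguably a touch cleaner but amounts to the same computation.
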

\begin{proof}
Consider the function $g(x)=e_{\alpha}^{\lambda e^t+1}(x)$. Then 
\begin{equation}\label{kderiv}
 g^{(k)}(x)=\Big(\frac{d}{dx}\Big)^ke_{\alpha}^{\lambda e^t+1}(x)=\frac{(\lambda e^t+1)_{k,\alpha}}{(1+\alpha x)_{k,\alpha}}e_{\alpha}^{\lambda e^t+1}(x).
\end{equation}
Using \eqref{newgSn}, \eqref{Formpowerseries} and  \eqref{kderiv}, we obtain
\begin{align}
\nonumber &\sum_{m=0}^{\infty}\frac{(\lambda e^t+1)_{m, \alpha}}{m!}f(m)x^m =\sum_{m=0}^{\infty}\frac{f^{(m)}(0)}{m!}\sum_{k=0}^{m}S_{2}(m,k)x^k\frac{(\lambda e^t+1)_{k,\alpha}}{(1+\alpha x)^k}e_{\alpha}^{\lambda e^t+1}(x)\\\nonumber
&=e_{\alpha}^{\lambda e^t+1}(x)\sum_{m=0}^{\infty}\frac{f^{(m)}(0)}{m!}\sum_{k=0}^{m}S_{2}(m,k)\Big(\frac{x}{1+\alpha x}\Big)^k k!\sum_{j=0}^{\infty}y_{1,\alpha}^{*}(j,k;\lambda)\frac{t^j}{j!}\\ \nonumber
&=\sum_{l=0}^{\infty}\phi^{*}_{l}(x|\alpha,\lambda)\frac{t^l}{l!}\sum_{m=0}^{\infty}\frac{f^{(m)}(0)}{m!}\sum_{k=0}^{m}S_{2}(m,k)\Big(\frac{x}{1+\alpha x}\Big)^k k!\sum_{j=0}^{\infty}y_{1,\alpha}^{*}(j,k;\lambda)\frac{t^j}{j!}\\ \nonumber
&=\sum_{n=0}^{\infty}\bigg(\sum_{j=0}^{n}\binom{n}{j}\sum_{m=0}^{\infty}\sum_{k=0}^{m}S_{2}(m,k)\Big(\frac{x}{1+\alpha x}\Big)^kk!\frac{f^{(m)}(0)}{m!}y_{1,\alpha}^{*}(j,k;\lambda)\phi^{*}_{n-j}(x|\alpha,\lambda)\bigg)\frac{t^n}{n!}.
\end{align}
By comparing the coefficients of both sides of the above identity, we have the desired result.
\end{proof}

\subsection{Numerical Results and Graphical Illustrations of $y_{1, \alpha}^{*}(n,k;\lambda)$}
In this subsection, similarly to the recent computational implementations given by Kucukoglu~\cite{IK2025} for $q$-combinatorial Simsek numbers and polynomials, we present numerical illustrations and graphical  plots to highlight the behavior of the generalized numbers
$y_{1,\alpha}^{*}(n,k;\lambda)$ with respect to the parameters
$n$, $k$, $\alpha$, and $\lambda$.
All computations are based on the explicit formula~\eqref{expforntsn}, adopting the convention $0^0=1$.
\medskip
We first  present explicit numerical values of $y_{1,\alpha}^{*}(n,k;\lambda)$ in tabular form for small values of
$n$ and $k$. Table~\ref{tab:alphafree_lambda1} lists the values for fixed $\lambda=1$ and
free $\alpha$, while Table~\ref{tab:alpha1_lambdafree} provides the
corresponding values for fixed $\alpha=1$ and free $\lambda$, in both cases for
$0 \leq n,k \leq 4$.
\begin{table}[h]
\centering
\setlength{\tabcolsep}{12pt} 
\renewcommand{\arraystretch}{1.5} 
\begin{tabular}{|c|ccccc|}
\hline
$n \;\backslash\; k$ & $0$ & $1$ & $2$ & $3$ & $4$ \\
\hline
$0$ &
$1$ &
$2$ &
$\alpha + 2$ &
$\frac{2\alpha^{2} + 6\alpha + 4}{3}$ &
$\frac{3\alpha^{3} + 11\alpha^{2} + 12\alpha + 4}{6}$ \\

$1$ &
$0$ &
$1$ &
$\frac{\alpha + 4}{2}$ &
$\frac{\alpha^{2} + 6\alpha + 6}{3}$ &
$\frac{3\alpha^{3} + 22\alpha^{2} + 36\alpha + 16}{12}$ \\

$2$ &
$0$ &
$1$ &
$\frac{\alpha + 6}{2}$ &
$\frac{\alpha^{2} + 9\alpha + 12}{3}$ &
$\frac{3\alpha^{3} + 33\alpha^{2} + 72\alpha + 40}{12}$ \\

$3$ &
$0$ &
$1$ &
$\frac{\alpha + 10}{2}$ &
$\frac{\alpha^{2} + 15\alpha + 27}{3}$ &
$\frac{3\alpha^{3} + 55\alpha^{2} + 162\alpha + 112}{12}$ \\

$4$ &
$0$ &
$1$ &
$\frac{\alpha + 18}{2}$ &
$\frac{\alpha^{2} + 27\alpha + 66}{3}$ &
$\frac{3\alpha^{3} + 99\alpha^{2} + 396\alpha + 340}{12}$ \\
\hline
\end{tabular}
\caption{Values of $y_{1,\alpha}^{*}(n,k;1)$ for $0\leq n, k\leq 4$.}
\label{tab:alphafree_lambda1}
\end{table}
\FloatBarrier
\begin{table}[htbp]
\centering
\setlength{\tabcolsep}{11pt} 
\renewcommand{\arraystretch}{1.7} 
\begin{tabular}{|c|ccccc|}
\hline
$n \;\backslash\; k$ & $0$ & $1$ & $2$ & $3$ & $4$ \\
\hline
$0$ &
$1$ &
$\lambda + 1$ &
$\frac{\lambda^{2} + 3\lambda + 2}{2}$ &
$\frac{\lambda^{3} + 6\lambda^{2} + 11\lambda + 6}{6}$ &
$\frac{\lambda^{4} + 10\lambda^{3} + 35\lambda^{2} + 50\lambda + 24}{24}$ \\

$1$ &
$0$ &
$\lambda$ &
$\frac{2\lambda^{2} + 3\lambda}{2}$ &
$\frac{3\lambda^{3} + 12\lambda^{2} + 11\lambda}{6}$ &
$\frac{2\lambda^{4} + 15\lambda^{3} + 35\lambda^{2} + 25\lambda}{12}$ \\

$2$ &
$0$ &
$\lambda$ &
$\frac{4\lambda^{2} + 3\lambda}{2}$ &
$\frac{9\lambda^{3} + 24\lambda^{2} + 11\lambda}{6}$ &
$\frac{8\lambda^{4} + 45\lambda^{3} + 70\lambda^{2} + 25\lambda}{12}$ \\

$3$ &
$0$ &
$\lambda$ &
$\frac{8\lambda^{2} + 3\lambda}{2}$ &
$\frac{27\lambda^{3} + 48\lambda^{2} + 11\lambda}{6}$ &
$\frac{32\lambda^{4} + 135\lambda^{3} + 140\lambda^{2} + 25\lambda}{12}$ \\

$4$ &
$0$ &
$\lambda$ &
$\frac{16\lambda^{2} + 3\lambda}{2}$ &
$\frac{81\lambda^{3} + 96\lambda^{2} + 11\lambda}{6}$ &
$\frac{128\lambda^{4} + 405\lambda^{3} + 280\lambda^{2} + 25\lambda}{12}$ \\
\hline
\end{tabular}
\caption{Values of $y_{1,1}^{*}(n,k;\lambda)$ for $0\leq n, k\leq 4$.}
\label{tab:alpha1_lambdafree}
\end{table}
\FloatBarrier
\medskip

These tables clearly illustrate the polynomial dependence of
$y_{1,\alpha}^{*}(n,k;\lambda)$ on the free parameter.
For fixed $\lambda=1$, Table~\ref{tab:alphafree_lambda1} shows that the entries
are polynomials in $\alpha$ whose degrees increase with $k$, reflecting the
role of the degeneracy parameter in deforming the classical structure.
Similarly, Table~\ref{tab:alpha1_lambdafree} demonstrates that, for fixed
$\alpha=1$, the values are polynomial expressions in $\lambda$ with coefficients
depending on $n$ and $k$.

\medskip

Moreover, the boundary cases $k=0$ and $k=1$ are consistent with the theoretical
properties of the numbers, namely
$y_{1,\alpha}^{*}(n,0;\lambda)=\delta_{n0}$ and
$y_{1,\alpha}^{*}(n,1;\lambda)=\delta_{n0}+\lambda$.

%
For completeness, we also include in Table~\ref{tab:alpha1_lambda1}
the numerical values of $y_{1,\alpha}^{*}(n,k;\lambda)$ for the representative
choice $(\alpha,\lambda)=(1,1)$ and $0 \le n,k \le 4$.
This table provides a direct numerical verification of the polynomial
expressions given in Tables~\ref{tab:alphafree_lambda1}
and~\ref{tab:alpha1_lambdafree}, and confirms the values displayed in the
corresponding graphical plots in Figures~\ref{fig:4comparison_panel} and~\ref{fig:4comparison_panel_n_fix} .

\begin{table}[htbp]
\centering
\footnotesize
\setlength{\tabcolsep}{10pt} 
\renewcommand{\arraystretch}{1.6} 
\begin{tabular}{|c|ccccc|}
\hline
$n\backslash k$ & $0$ & $1$ & $2$ & $3$ & $4$ \\ \hline
$0$ & $1$ & $2$ & $3$ & $4$ & $5$ \\
$1$ & $0$ & $1$ & $\frac{5}{2}$ & $\frac{13}{3}$ & $\frac{77}{12}$ \\
$2$ & $0$ & $1$ & $\frac{7}{2}$ & $\frac{22}{3}$ & $\frac{37}{3}$ \\
$3$ & $0$ & $1$ & $\frac{11}{2}$ & $\frac{43}{3}$ & $\frac{83}{3}$ \\
$4$ & $0$ & $1$ & $\frac{19}{2}$ & $\frac{94}{3}$ & $\frac{419}{6}$\\
\hline
\end{tabular}
\caption{Numerical values of $y_{1,1}^{*}(n,k;1)$ for $0\le n,k\le4$.}
\label{tab:alpha1_lambda1}
\end{table}
\FloatBarrier

In addition to the numerical illustrations, We also examine the variation of $y_{1,\alpha}^{*}(n,k;\lambda)$ as a function
of the order $n$ for a fixed combinatorial parameter $k$.

Figure~\ref{fig:4comparison_panel} illustrates the case $k=3$.
Panels~(a) and (c) show the values for fixed $\alpha=1$ and $\alpha=2$, respectively, with $\lambda\in\{1,2,3,4\}$, while panels~(b) and (d) illustrate the influence of the
degeneracy parameter $\alpha\in\{\frac{1}{2}, 1, 2, 3\}$ for fixed $\lambda=1$ and $\lambda=2$, respectively.
In both cases, the values increase rapidly with $n$, and larger values of
$\lambda$ or $\alpha$ lead to a pronounced growth rate, indicating a strong
dependence on these parameters.

\medskip

\begin{figure}[htbp]
\centering
\includegraphics[width=0.75\textwidth]{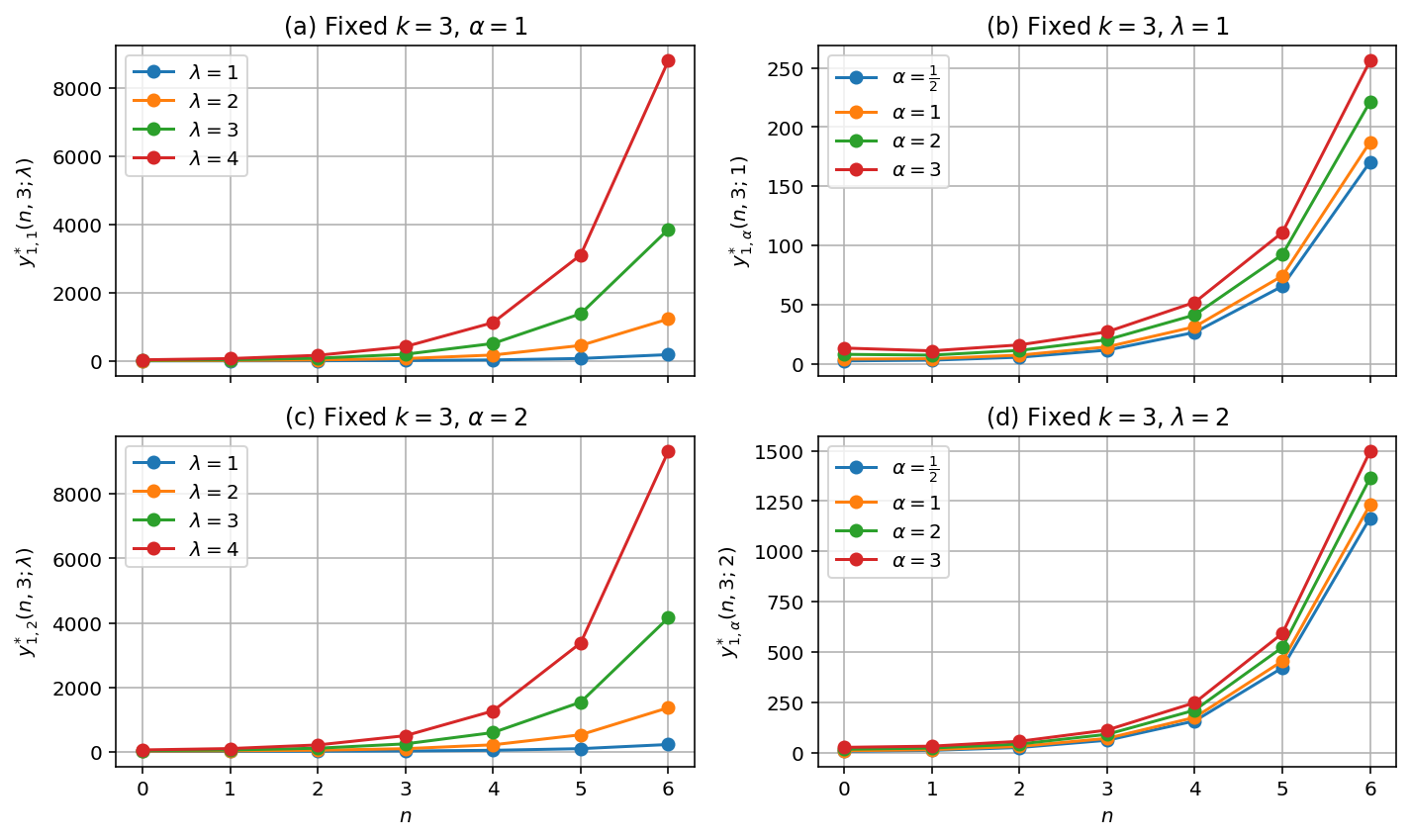}
\caption{
Numerical comparison of the degenerate Simsek numbers
$y_{1,\alpha}^{*}(n,3;\lambda)$.
Panels (a) and (c) illustrate the dependence on $\lambda$ for fixed
$\alpha$, while panels (b) and (d) show the effect of varying $\alpha$
for fixed $\lambda$.
}
\label{fig:4comparison_panel}
\end{figure}
\FloatBarrier
To complement this analysis, we next fix the order $n=3$ and study the dependence
on the parameter $k$.

Figure~\ref{fig:4comparison_panel_n_fix} presents a four-panel comparison.
Panels~(a) and~(c) correspond to fixed $\alpha=1$ and $\alpha=2$, respectively,
with $\lambda\in\{1,2,3,4\}$.
Panels~(b) and~(d) show the effect of varying $\alpha$ over $\{\frac{1}{2}, 1, 2, 3\}$ for fixed
$\lambda=1$ and $\lambda=2$, respectively.
In contrast to the growth in $n$, the dependence on $k$ is more moderate and
exhibits a structured polynomial-type behavior.
The influence of the degeneracy parameter $\alpha$ becomes more visible as $k$
increases, especially for larger values of $\lambda$.
\medskip
\begin{figure}[htbp]
\centering
\includegraphics[width=0.75\textwidth]{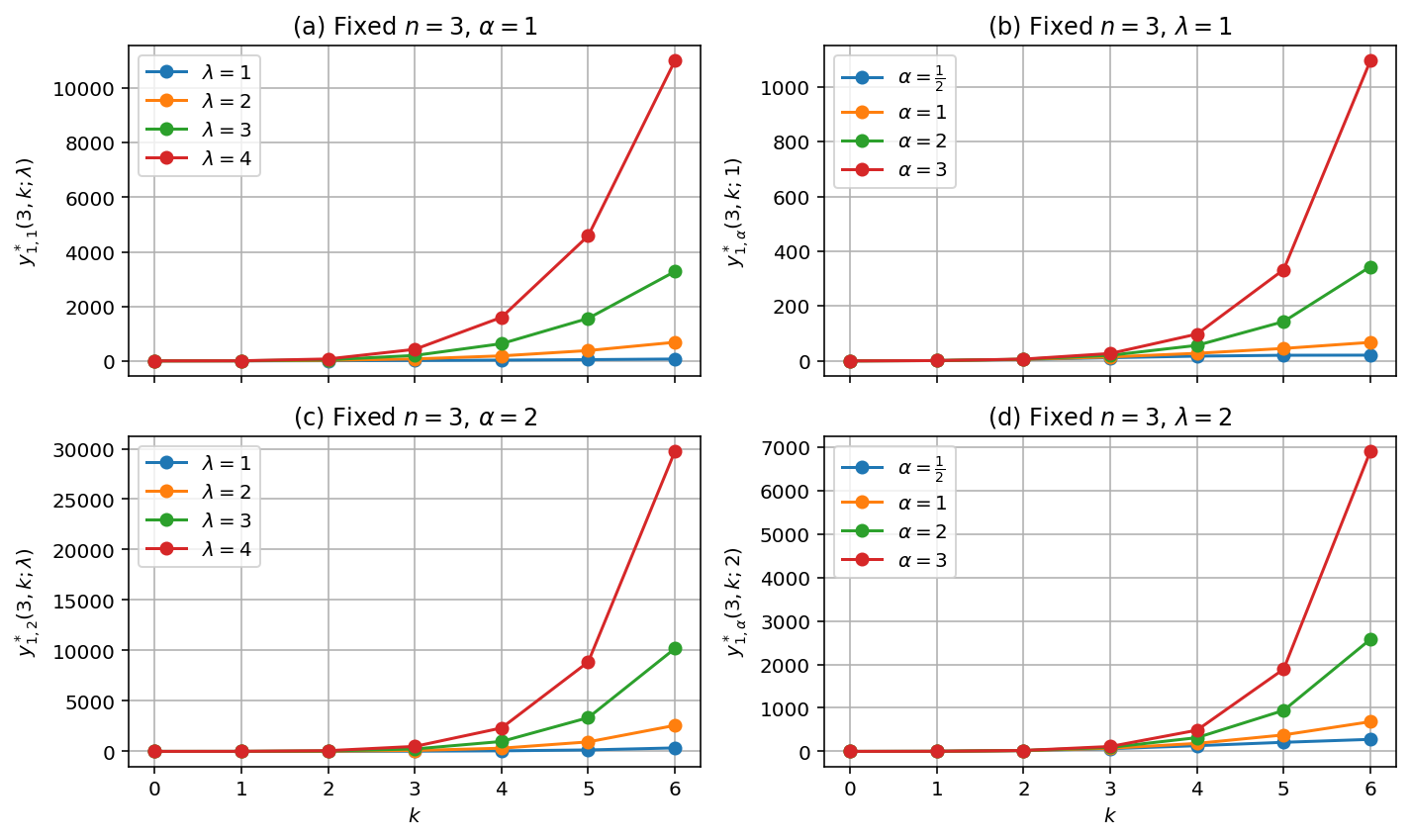}
\caption{
Numerical comparison of the degenerate Simsek numbers
$y_{1,\alpha}^{*}(3,k;\lambda)$.
Panels (a) and (c) illustrate the dependence on $\lambda$ for fixed
$\alpha$, while panels (b) and (d) show the effect of varying $\alpha$
for fixed $\lambda$.
}
\label{fig:4comparison_panel_n_fix}
\end{figure}
\FloatBarrier

Overall, these numerical and graphical experiments confirm the analytical structure of
$y_{1,\alpha}^{*}(n,k;\lambda)$ and clearly illustrate the distinct roles played
by the parameters $n$, $k$, $\alpha$, and $\lambda$.
In particular, the parameter $\lambda$ primarily controls the growth rate,
whereas $\alpha$ governs the deformation effect, which becomes more pronounced
for larger values of $k$ and $n$.

\section{Conclusions}
In the present work, we have established a new type degenerate Simsek numbers $y_{1,\alpha}^{*}(n,k;\lambda)$ with their generating function, using the degenerate falling function. These numbers are different from degenerate Simsek number studied so far. We analyzed their some properties, including derivative formula, recurrence relation, integral formula. In addition, we described the relation between these numbers and certain well-known special numbers, for instance, Stirling numbers of the first and the second kind, the degenerate first kind Apostol-Euler numbers, the degenerate Stirling numbers of the second kind, Bernoulli numbers, Simsek numbers. To complement the theoretical developments, we included numerical results and graphical illustrations that demonstrate the behavior of the introduced numbers for various parameter choices. Hence, the results of this paper have the potential to find an application in different areas of mathematics, especially dealing with discrete mathematics.

\end{document}